\documentclass[a4paper, 12pt]{article} 

\usepackage[utf8]{inputenc}
\usepackage[T1]{fontenc}
\usepackage[a4paper, margin=2.75cm]{geometry}
\usepackage{needspace, mathscinet}
\usepackage{blkarray} 
\usepackage{multirow}
\usepackage{libertine} 
\usepackage{inconsolata} 
\usepackage{nicefrac}
\usepackage{authblk, amsmath, amsthm, amssymb, graphicx, thmtools, thm-restate, mathtools}

\usepackage[colorlinks=true, citecolor=blue, linkcolor=blue, urlcolor=blue]{hyperref}
\usepackage{enumitem}
\usepackage[noabbrev,capitalize]{cleveref}
\crefname{claim}{Claim}{Claims}
\crefname{theorem}{Theorem}{Theorems}
\crefname{proposition}{Proposition}{Propositions}
\crefname{definition}{Definition}{Definitions}
\crefname{conjecture}{Conjecture}{Conjectures}
\crefname{corr}{Corollary}{Corollaries}

\newtheorem{example}{Example}

\declaretheorem[name=Theorem, numberwithin=section]{theorem}
\declaretheorem[name=Lemma, sibling=theorem]{lemma}

\declaretheorem[name=Definition, sibling=theorem]{definition}

\declaretheorem[name=Conjecture, sibling=theorem]{conjecture}

\declaretheorem[name=Observation, sibling=theorem]{observation}
\declaretheorem[name=Claim,
sibling=theorem]{claim}

\def\cqedsymbol{\ifmmode$\lrcorner$\else{\unskip\nobreak\hfil
\penalty50\hskip1em\null\nobreak\hfil$\lrcorner$
\parfillskip=0pt\finalhyphendemerits=0\endgraf}\fi}

\newenvironment{claimproof}[1][\proofname]
{%
	\proof[#1]%
}
{%
	\endproof%
}

\let\le\leqslant
\let\ge\geqslant
\let\leq\leqslant
\let\geq\geqslant

\let\OLDthebibliography\thebibliography
\renewcommand\thebibliography[1]{
  \OLDthebibliography{#1}
  \setlength{\parskip}{0pt}
  \setlength{\itemsep}{0pt plus 0.3ex}
}%
\AtBeginDocument{
   \def\MR#1{}
}%

\def\final{0}  
\def\iflong{\iffalse}
\ifnum\final=0  
\newcommand{\kristof}[1]{{\color{red}[\tiny{ \textbf{Kristóf:}  #1}]\marginpar{\color{red}*}}}
\newcommand{\benedek}[1]{{\color{blue}[\tiny{ \textbf{Benedek:}  #1}]\marginpar{\color{blue}*}}}
\newcommand{\kolja}[1]{{\color{orange}[\tiny{ \textbf{Kolja:}  #1}]\marginpar{\color{orange}*}}}
\newcommand{\luis}[1]{{\color{teal}[\tiny{ \textbf{Luis:}  #1}]\marginpar{\color{teal}*}}}
\else 
\newcommand{\kristof}[1]{}
\newcommand{\benedek}[1]{}
\newcommand{\kolja}[1]{}
\newcommand{\luis}[1]{}
\fi

\newcommand{\M}{\mathcal{M}}

\title{Embracing exchange sequences and oriented matroid polyhedron diameter}

\author{Kristóf Bérczi\thanks{Supported by the 
 Lend\"ulet Programme of the Hungarian Academy of Sciences (LP2021-1/2021), by the Ministry of Innovation and Technology of Hungary from the National Research, Development and Innovation Fund (ADVANCED 150556, 153096), and by the Dynasnet European Research Council Synergy project (ERC-2018-SYG 810115).}}
\affil{MTA-ELTE Matroid Optimization Research Group and HUN-REN-ELTE Egerváry Research Group, Department of Operations Research, ELTE Eötvös Loránd University, and HUN-REN Alfréd Rényi Institute of Mathematics, Budapest, Hungary.}
\author{Kolja Knauer\thanks{Supported through grant 
 PID2022-137283NB-C22 funded by MICIU/AEI/10.13039/501100011033 and by ERDF/EU and through the Severo Ochoa and María de Maeztu Program for Centers and Units of Excellence in R\&D (CEX2020-001084-M) and by the ANR project MIMETIQUE: ANR-25-CE48-4089-01.}}
\affil{Departament de Matemàtiques i Informàtica, Universitat de Barcelona, Centre de Recerca Matemàtica, Barcelona, Spain.}
\author{Luis Pedro Montejano\thanks{Supported by the Spanish Ministry of Science and
Innovation due to the grants PID2023-146925OB-I00 and PID2023-148202OB-C21.}}
\affil{Departament d'Enginyeria Inform\`atica i Matem\`atiques, Universitat Rovira i Virgili, Tarragona, Spain.}
\author{Benedek Nádor}
\affil{Department of Operations Research, ELTE Eötvös Loránd University}
\begin{document}

\maketitle

\begin{abstract}
    We reduce the embracing exchange distance of bases of oriented matroids to the metric of oriented matroid polyhedra. This allows us to disprove a recent conjecture of Caoduro, Khodamoradi, Paat, and Shepherd and its oriented-matroid extension introduced in~\cite{BN25}. On the other hand, we show that any two embracing bases of an oriented matroid of rank $r$ can be transformed into each other in at most $2r^{\log_2(r)+3}$ steps {and in at most $r$ steps in a graphic oriented matroid or a Lawrence oriented matroid, thus confirming the conjecture in these cases.}
\end{abstract}

\section{Introduction}
Caoduro, Khodamoradi, Paat, and Shepherd conjectured a $0$-embracing exchange bound for simplices; see~\cite[Conjecture~1.1]{BN25}. We use the following nondegenerate variant, cf.~\cite[Conjecture~4.1]{Nad26}:
\begin{conjecture}[{\cite[Conjecture~4.1]{Nad26}}]\label{conj:realizable}
Any two $d$-dimensional simplices that have the origin in their interior and such that no subset of size
at most $d$ vertices of their union has the origin in the relative interior of its convex hull\footnote{This has to be added to make the graph connected, since otherwise partitioning the corners
of a regular hexagon into two alternating triangles yields a disconnected instance.} can be
transformed into each other via at most $d+1$ vertex exchanges while always maintaining the origin
in the interior.
\end{conjecture}
B\'erczi and N\'ador~\cite{BN25} generalized the $0$-embracing exchange problem to the setting of
oriented matroids in the following way. Let $\mathcal{M}=(E,\mathcal{C})$ be an oriented matroid and
$e\in E$. Denote by $\mathcal{C}^+$ the set of positive circuits of $\mathcal{M}$ and by $\mathcal{C
}_e^+$ the set of \emph{$e$-positive circuits}, i.e., those $X\in\mathcal{C}^+$ with $X(e)=+$. For a
basis $B\in \mathcal{B}$ (of the underlying matroid) such that $e\notin B$, the \emph{fundamental
circuit} $C(B,e)$ is the unique $X\in \mathcal{C}$ whose support $\underline{C(B,e)}$ is contained in
$B\cup\{e\}$ and $C(B,e)_e=+$. We say that $B$ is \emph{$e$-embracing}\footnote{In~\cite{BN25} this
was defined differently but equivalent up to reorientation.} if $C(B,e)\in \mathcal{C}_e^+$.
The \emph{$e$-embracing distance} $d_e(B,B')$ of two $e$-embracing bases $B,B'$ is the minimum $k$ such
that there is a sequence $(B=B_0,B_1,\ldots,B_k=B')$ of $e$-embracing bases such that $|B_{i-1}\Delta
B_i|=2$ for all $i\in[k]$. B\'erczi and N\'ador~\cite{BN25} conjectured:
\begin{conjecture}[{\cite[Conjecture~1.2]{BN25}}]\label{conj:general}
The $e$-embracing exchange distance of two $e$-embracing bases of a rank $r$ oriented matroid is at most
$r$.
\end{conjecture}
In the realizable nondegenerate case, where all $e$-positive circuits have size $r+1$, \cref{conj:general}
specializes to \cref{conj:realizable}.

The set $\mathcal{C}_e^+$ of $e$-positive circuits constitutes the vertices of the $1$-skeleton $G(P_{\mathcal{M}^*,e})$ of the oriented matroid polyhedron $P_{\mathcal{M}^*,e}$ of the affine oriented matroid $(\mathcal{M}^*,e)$, see~\cite[Definition 10.1.1]{BLSWZ99}. Our main tool is that the oriented matroid polyhedron metric controls the $e$-embracing distance between
bases with distinct $e$-positive fundamental circuits up to a factor $r$; bases with the same
fundamental circuit have their ordinary basis-exchange distance:

\begin{restatable}{theorem}{main}
\label{thm:main}
   Let $\mathcal{M}=(E,\mathcal{C})$ be an oriented matroid of rank $r\geq 1$,  $e\in E$, and let $B,B'$ be $e$-embracing bases of ${\mathcal{M}}$. If $C(B,e)=C(B',e)$, then $d_e(B,B')=|B\setminus B'|$ and otherwise
   $$d_{G(P_{\mathcal{M}^*,e})}(C(B,e),C(B',e))\leq d_e(B,B')\leq rd_{G(P_{\mathcal{M}^*,e})}(C(B,e),C(B',e)).$$ Moreover, if all circuits in $\mathcal{C}^+_e$ are of size $r+1$, then the lower bound is attained with equality.
\end{restatable}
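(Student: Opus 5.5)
The plan is to treat the three assertions separately, all built on one basic observation. Call it (O): for an $e$-embracing basis $B$, the circuit $C(B,e)$ is the unique circuit with support in $B\cup\{e\}$. Hence $C(B,e)=X$ holds if and only if $\underline{X}\setminus\{e\}\subseteq B$. So the $e$-embracing bases with fundamental circuit $X\in\mathcal{C}^+_e$ are exactly the bases of $\mathcal{M}$ containing $\underline{X}\setminus\{e\}$.

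\emph{Same circuit.} Suppose $C(B,e)=C(B',e)=X$. By (O), the bases containing $\underline{X}\setminus\{e\}$ correspond bijectively to the bases of the contraction $\mathcal{M}/(\underline{X}\setminus\{e\})$. Every such basis is $e$-embracing with circuit $X$. The symmetric exchange property in this contraction therefore yields a path of length $|B\setminus B'|$ that stays among these bases, which gives $d_e(B,B')\le|B\setminus B'|$. The reverse inequality is trivial, since each step changes $|B_i\setminus B'|$ by at most one.

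\emph{Lower bound.} Let $(B_0,\dots,B_k)$ be an $e$-embracing exchange sequence and put $X_i=C(B_i,e)$. I will show that consecutive circuits are equal or adjacent in $G(P_{\mathcal{M}^*,e})$; then $(X_i)$ is a walk and the lower bound follows. Suppose $X_{i-1}\neq X_i$, and set $S=B_{i-1}\cup B_i\cup\{e\}$, so $|S|=r+2$ and $\mathcal{M}|S$ has corank $2$. In the dual, the face of $P_{\mathcal{M}^*,e}$ cut out by the elements of $E\setminus S$ is obtained by restricting to $S$, that is, by contracting $E\setminus S$ in $\mathcal{M}^*$. This face has dimension at most $1$, and it contains the two distinct vertices $X_{i-1}$ and $X_i$. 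A face of dimension at most one has at most two vertices, and contains two only if it is an edge. Hence $X_{i-1}X_i$ is an edge.

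For the equality case, assume all $e$-positive circuits have size $r+1$. By (O), the basis with circuit $X$ is then forced to be $\underline{X}\setminus\{e\}$. If $X$ and $Y$ are adjacent vertices, the edge is a one-dimensional face, so its support has corank $2$ and size $r+2$, and $|\underline{X}\cap\underline{Y}|=r$. Consequently $\underline X\setminus\{e\}$ and $\underline Y\setminus\{e\}$ differ in one exchange, and a shortest path in the graph lifts step by step, giving equality.

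\emph{Upper bound.} Fix a shortest path $X_0=C(B,e),X_1,\dots,X_d=C(B',e)$ in the graph. For an edge $X_{j-1}X_j$, let $F=\underline{X_{j-1}}\cup\underline{X_j}$; then $\mathcal{M}|F$ has corank $2$. I will choose $f\in\underline{X_{j-1}}\setminus\underline{X_j}$ and $g\in\underline{X_j}\setminus\underline{X_{j-1}}$ and extend $F\setminus\{e,f\}$ to a basis $D_j$. By construction $D_j$ contains $\underline{X_{j-1}}\setminus\{e\}$, and $D_j-g+f$ contains $\underline{X_j}\setminus\{e\}$. Both are bases and both are $e$-embracing by (O), so they differ by a single allowed exchange. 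The route is then: move from the current basis to $D_j$ within circuit class $X_{j-1}$, paying the same-circuit cost, then make the switch. At the end, reach $B'$ within class $X_d$.

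The main obstacle is the accounting needed to get total cost $\le rd$ rather than roughly $(r+1)d+r$. I would first try to choose each $D_j$ greedily, so that it agrees with the current basis as much as possible. The extension of $F\setminus\{e,f\}$ can be made using elements of the current basis $B_{\text{cur}}$, and the reconfiguration within the class then costs at most $|(F\setminus\{e,f\})\setminus B_{\text{cur}}|\le r-1$; adding the switch, each edge costs at most $r$. For the final leg to $B'$, the last extension $D_d$ should instead be completed from $B'$, so that no extra cost remains. Checking that these greedy choices are simultaneously compatible is where I expect most of the effort to lie.
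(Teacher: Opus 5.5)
\textbf{Verdict: the upper bound is not proved.} Two things are missing: the exchange step in the switch is never justified, and the cost accounting is left unfinished. The other parts are fine.

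\textbf{Same circuit, lower bound, equality case.} The same-circuit case matches the paper. For the lower bound and the equality case you take a different route: you reduce to the rank-$2$ oriented matroid $(\mathcal{M}|S)^*$ with $S=B_{i-1}\cup B_i\cup\{e\}$, instead of the elimination arguments of Lemmas~\ref{lem:lb} and~\ref{obs:uniformcircuit}. This works, with two facts that you should state. Definition~\ref{lem:combi} also excludes positive circuits $Z$ with $Z_e=0$, so you need that a rank-$2$ oriented matroid has at most two nonnegative cocircuits. The equality case uses that adjacent $X,Y$ have $\underline{X}\cup\underline{Y}$ of nullity $2$, which relies on gradedness of the face lattice; the paper derives it from Lemma~\ref{lem:onlytwo}.

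\textbf{The switching step.} First, you have the roles of $f$ and $g$ reversed. Removing $f\in\underline{X_{j-1}}\setminus\underline{X_j}$ leaves $\underline{X_j}\setminus\{e\}$ inside $F\setminus\{e,f\}$, not $\underline{X_{j-1}}\setminus\{e\}$. You need $f\in\underline{X_j}\setminus\underline{X_{j-1}}$ and $g\in\underline{X_{j-1}}\setminus\underline{X_j}$.

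More seriously, ``both are bases'' is asserted without argument, and it does not follow from nullity $2$. The set $F\setminus\{e\}$ still has nullity $1$. So $F\setminus\{e,f\}$ is independent only if $f$ lies in every circuit of $F\setminus\{e\}$. Likewise, $D_j+f-g$ is a basis only if $g$ lies in the unique circuit of $D_j+f$.

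The missing ingredient is the paper's Lemma~\ref{lem:onlytwo}. Since $X_{j-1}$ and $X_j$ are the only positive circuits supported in $F$, every mixed circuit supported in $F$ contains $\underline{X_{j-1}}\Delta\underline{X_j}$. This is proved by inclusion-minimality of the negative part together with weak and strong elimination. Every circuit in $F\setminus\{e\}$ is mixed, because both positive circuits contain $e$. Hence every such circuit contains $f$ and $g$. This gives independence of $F\setminus\{e,f\}$. Moreover, the circuit obtained by eliminating $e$ between $X_{j-1}$ and $-X_j$ lies in $D_j+f$, so it is that set's unique circuit, and it contains $g$.

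\textbf{The accounting.} The step you flag as the main obstacle involves no compatibility issue. It only needs a finer per-edge bound than $r$.
\begin{itemize}
\item Extend $F\setminus\{e,f\}$ by elements of the current basis. Then edge $j$ costs at most $|\underline{X_j}\setminus\underline{X_{j-1}}|$ exchanges.
\item This quantity is at most $r$ for every $j$, because $e$ is common to both circuits. For $j=d$ it is at most $|\underline{X_d}\setminus\{e\}|$.
\item The final leg inside the class of $X_d$ costs at most $r-|\underline{X_d}\setminus\{e\}|$, since both bases contain $\underline{X_d}\setminus\{e\}$.
\end{itemize}
So the last edge plus the final leg cost at most $r$, and the total is at most $(d-1)r+r=rd$.

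Your alternative of completing $D_d$ from $B'$ ``so that no extra cost remains'' is inaccurate as stated. After the switch you still hold $\underline{X_{d-1}}\setminus\{e,g\}$, and up to $|\underline{X_{d-1}}\setminus\underline{X_d}|-1$ of these elements may lie outside $B'$. That variant can be rescued by an explicit count, giving at most $r-|\underline{X_{d-1}}\cap\underline{X_d}|+1\le r$. As written, however, the bound $rd$ is not established.
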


It might be that the lower bound in \cref{thm:main} indeed always is an equality - we believe that this is an interesting question. In any case, choosing the counterexamples to the polyhedral Hirsch conjecture of Klee and Walkup~\cite{KW67} and Todd~\cite{Tod80} as $P_{\mathcal{M}^*,e}$ we can disprove Conjectures~\ref{conj:realizable} and~\ref{conj:general}.

\begin{restatable}{corollary}{lb}
\label{cor:lb}
For every $r\geq 1$ there is a realizable oriented matroid
$\mathcal M$ of rank $r$ with $e$-embracing bases $B, B'$ such that $d_e(B,B')\geq \lfloor\frac{5}{4}r\rfloor$.
\end{restatable}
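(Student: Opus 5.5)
The plan is to combine the lower bound of \cref{thm:main} with a known counterexample to the (polyhedral) Hirsch conjecture, and then to pad the dimension. By \cref{thm:main}, whenever $C(B,e)\neq C(B',e)$ we have $d_e(B,B')\geq d_{G(P_{\mathcal{M}^*,e})}(C(B,e),C(B',e))$. So it is enough to realize, for each rank $r$, an oriented matroid polyhedron $P_{\mathcal{M}^*,e}$ with two vertices at graph distance at least $\lfloor\frac54 r\rfloor$, and then to choose $e$-embracing bases whose fundamental circuits are these two vertices. Such bases exist because each $e$-positive circuit $X$ extends to a basis $B\supseteq \underline{X}\setminus\{e\}$, and then $C(B,e)=X$.

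\textbf{The polyhedron.} I would use the unbounded Hirsch counterexample of Klee and Walkup~\cite{KW67}: a $4$-dimensional polyhedron with $8$ facets whose diameter is $5>8-4$. Todd's example~\cite{Tod80} serves as an alternative. Write it as $\{x\in\mathbb R^d: Ax\le b\}$ with $m$ inequalities and $d=4$. Its homogenization, i.e.\ the vector configuration given by the columns of $\begin{pmatrix}A & -b\\ 0 & 1\end{pmatrix}^{T}$ together with the extra element $e$ for the hyperplane at infinity, gives a realizable affine oriented matroid $(\mathcal N,e)$ whose polyhedron is the given one. Set $\mathcal M=\mathcal N^*$; then $P_{\mathcal M^*,e}=P$. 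The rank of $\mathcal M$ is $|E|-\operatorname{rank}\mathcal N = (m+1)-(d+1)=m-d$. For Klee--Walkup this gives $r=4$ and distance $5=\lfloor\frac54\cdot 4\rfloor$. I would need to check that the vertices of the polyhedron correspond exactly to the $e$-positive circuits of $\mathcal M$ (covectors of $\mathcal N$), as stated before \cref{thm:main}, and that the graph metric agrees.

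\textbf{General $r$.} For larger $r$ I would take products, or apply the standard ``wedge/product'' constructions. The product of $k$ copies of the Klee--Walkup polyhedron has dimension $4k$, $8k$ facets, and diameter $5k$, since distances add in products. Its Gale-dual rank is $8k-4k=4k$, which gives $\frac54 r$ exactly when $4\mid r$. For other residues I would take a product with a simplex or an interval, or with a small cone, adding $1$, $2$ or $3$ to the rank and $0$ or more to the diameter. This yields at least $5k=\lfloor\frac54 r\rfloor$ for $r=4k+j$ with $j\le 3$. One checks $\lfloor\frac54(4k+j)\rfloor=5k+\lfloor 5j/4\rfloor$, which equals $5k+j$ for $j\le 3$. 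So each extra rank must also add one to the diameter. An interval factor (a line segment, with $2$ facets and dimension $1$) adds $1$ to the rank and $1$ to the diameter, which works.

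For small $r<4$ the bound $\lfloor\frac54 r\rfloor=r$ is achieved by products of intervals, i.e.\ cubes. These are realizable, and the lower bound $d_e\geq r$ follows from \cref{thm:main}.

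\textbf{Main obstacle.} The main obstacle is the bookkeeping of the Gale duality: showing that the product polyhedron, as an affine oriented matroid polyhedron, corresponds to a dual $\mathcal M$ of rank exactly (number of facets) $-$ (dimension), with no redundant inequalities. Redundant inequalities would change the rank, so I would make sure that all facets are irredundant and that the polyhedron is pointed, so that vertices exist.
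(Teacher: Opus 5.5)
Your proof is correct, but it is organized differently from the paper's.

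For ranks divisible by $4$, the paper uses the $4$-dimensional Klee--Walkup/Todd family $Q_k$, which has $4+4k$ facets and diameter $5k$. You use the $k$-fold product of the single Klee--Walkup polyhedron, which has dimension $4k$, $8k$ facets and diameter $5k$, because the graph of a product of polyhedra is the Cartesian product of the graphs. Both give a realizable dual of rank $4k$ with two $e$-positive circuits at graph distance $5k$, and your version needs only the one example. The paper also invokes simplicity and the equality case of \cref{thm:main} to get $d_e(B,B')=5k$ exactly. You use only the general lower bound, plus the correct observation that every $e$-positive circuit extends to a basis with that fundamental circuit, and that is all the corollary needs.

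The real difference is the padding for $r=4k+j$ with $j\in\{1,2,3\}$. The paper takes a direct sum with $j$ copies of $U_{1,2}$. This leaves the polyhedron unchanged, so the lower bound of \cref{thm:main} cannot detect the extra rank. The paper therefore argues directly on exchange sequences: the projection to the first summand needs at least $5k$ steps, and each rank-one summand must change at least once. You instead take a product with $j$ intervals. An interval has $2$ facets in dimension $1$, so each factor raises both the dual rank and the graph distance by exactly one. This lets you apply the lower bound of \cref{thm:main} uniformly, including the cube for $r\le 3$. The cost is the Gale-duality bookkeeping you flagged: pointedness, irredundant facets, and reorienting the inequality columns so the polyhedron lies on the nonnegative side. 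All of this is routine, since products of pointed, irredundantly described polyhedra stay pointed and irredundant.

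One correction: drop your first suggestion, a product with a simplex or cone adding ``$0$ or more'' to the diameter. Since $\lfloor\frac54 r\rfloor=5k+j$, not $5k$, only the interval factor, as you then note, gives the required bound.
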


Note that a purely geometric disproof of \cref{conj:realizable} has also been provided by the last author in his Bachelor's thesis~\cite{Nad26}. 

Combining some oriented matroid restrictions with the fact that $G(P_{\mathcal{M}^*,e})$ obeys the proof of the upper bound on polyhedron diameter due to Kalai and Kleitman~\cite{KK92} we also get a positive result:

\begin{restatable}{corollary}{ub}
\label{cor:ub}
Let $\mathcal{M}=(E,\mathcal{C})$ be an oriented matroid of rank $r\geq 1$, $e\in E$, and let $B,B'$ be $e$-embracing bases of ${\mathcal{M}}$. Then $d_e(B,B')\leq 2r^{\log_2(r)+3}$.
\end{restatable}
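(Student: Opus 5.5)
The plan is to combine the upper bound of \cref{thm:main} with a Kalai--Kleitman style bound on the diameter of $G(P_{\mathcal{M}^*,e})$. If $C(B,e)=C(B',e)$, then \cref{thm:main} gives $d_e(B,B')=|B\setminus B'|\le r$, and there is nothing more to show. Otherwise $d_e(B,B')\le r\cdot d_{G(P_{\mathcal{M}^*,e})}(C(B,e),C(B',e))$. It therefore suffices to show that the graph distance between any two vertices of $G(P_{\mathcal{M}^*,e})$ is at most $2r^{\log_2(r)+2}$.

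To bound that distance, I would run the Kalai--Kleitman induction directly in the oriented matroid setting, where facets are replaced by elements of $E\setminus\{e\}$. For $f\in E\setminus\{e\}$, the positive circuits in $\mathcal{C}^+_e$ with $X(f)=0$ should be exactly the $e$-positive circuits of the deletion $\mathcal{M}\setminus f$. Dually, these form the polyhedron $P_{\mathcal{M}^*/f,e}$, which is the face of $P_{\mathcal{M}^*,e}$ lying on the facet $f$. The relevant dimension parameter is $d=r-1$, the dimension of $P_{\mathcal{M}^*,e}$ (possibly smaller), and the number of facets is at most $n=|E|-1$.

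For the Kalai--Kleitman argument I need three ingredients.
\begin{enumerate}
\item Faces are oriented matroid polyhedra of smaller dimension, given by restrictions.
\item For a vertex $v$, the sets $F_k(v)$ of facets reachable within $k$ steps can be defined.
\item If both $|F_k(u)|>n/2$ and $|F_k(v)|>n/2$, some facet $f$ is reachable from both; one then walks inside the face $f$, which lies in the graph of a contraction of $\mathcal{M}^*$ of lower rank.
\end{enumerate}
The only geometric facts used are that edges and faces are given by covector intervals. Adjacency is defined via the face lattice, so these facts hold in the big face lattice of oriented matroids by \cite{BLSWZ99}. The standard argument yields the recursion $\Delta(d,n)\le 2\Delta(d,\lfloor n/2\rfloor)+\Delta(d-1,n)+2$, which solves to $n^{\log_2 d+2}$.

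The main obstacle is that $n=|E|$ is unbounded in terms of $r$, so I must reduce to an instance with few elements. I would argue that only elements that actually matter need to be kept. Take a shortest path's worth of circuits, or better, restrict to $B\cup B'\cup\{e\}$. This set has size at most $2r+1$, and $B,B'$ remain $e$-embracing bases of the restriction. Any exchange sequence in $\mathcal{M}|(B\cup B'\cup\{e\})$ is also valid in $\mathcal{M}$, since fundamental circuits are unchanged under restriction. Then $n\le 2r$ and $d\le r-1$, so the diameter is at most $(2r)^{\log_2 r+2}\le 2^{\log_2 r+2}r^{\log_2 r+2}=4r\cdot r^{\log_2 r+2}$. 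Multiplying by the factor $r$ from \cref{thm:main} overshoots the claimed bound by a constant factor.

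The constants therefore need tuning. I expect to use the sharper Kalai--Kleitman form $n^{\log_2 d+2}$ with $d=r-1$ together with a more careful count, or to bound the number of facets of the restricted polyhedron by $r+1$ plus a small term. The aim is a diameter bound of $2r^{\log_2 r+2}$, which after the factor $r$ gives the stated $2r^{\log_2(r)+3}$. Checking that the Kalai--Kleitman recursion transfers verbatim to oriented matroid polyhedra is the conceptual step. The constant bookkeeping after the restriction to $B\cup B'\cup\{e\}$ is the fiddly one.
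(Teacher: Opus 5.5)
Your outline matches the paper's: restrict to $B\cup B'\cup\{e\}$, then multiply a Kalai--Kleitman diameter bound for $G(P_{\mathcal{M}^*,e})$ by the factor $r$ from \cref{thm:main}. As written, however, it does not reach the stated bound, and the ``constant tuning'' you postpone is the actual missing idea. Your arithmetic also understates the shortfall. $r\cdot(2r)^{\log_2 r+2}=4r^{\log_2 r+4}$ exceeds $2r^{\log_2 r+3}$ by a factor $2r$, not by a constant. Neither proposed fix helps. The form $n^{\log_2 d+2}$ is the one you already used, and after restriction there can genuinely be $2r$ elements besides $e$ (when $B\cap B'=\emptyset$).

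What the paper uses in \cref{lem:ub} is the refined solution $N^{\log_2 d+1}$ of the same recursion. It is taken from \cite{KS09bis} under the hypothesis $|E|\le 2^{\operatorname{rk}-1}$.

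Your dimension count is also wrong. $P_{\mathcal{M}^*,e}$ lives in $\mathcal{M}^*$, which has rank $|E|-r$, so $d=|E|-r-1$, not $r-1$. The paper pads $E$ with loops up to $|E|=2r+1$; this changes neither the bases nor the fundamental circuits. Then $\operatorname{rk}(\mathcal{M}^*)=r+1$, $d=r$, $N=2r$, and $2r+1\le 2^r$ holds once $r\ge 3$. This gives diameter at most $(2r)^{\log_2 r+1}=2r^{\log_2 r+2}$, which is exactly your target, and the factor $r$ yields the claim. The cases $r\le 2$ are handled separately. There are at most $\binom{2r}{r}$ bases avoiding $e$, and the exchange graph is connected by \cref{thm:main}, so $d_e(B,B')\le\binom{2r}{r}-1$.

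Second, the Kalai--Kleitman argument does not transfer verbatim, and your ingredient list skips its one nontrivial step. The faces $\{X(f)=0\}$ correspond to contractions $\mathcal{M}^*/f$ (your item 1 calls them restrictions), and walking inside them is fine. But bounding $k_v$ by the diameter for $\lfloor N/2\rfloor$ elements requires discarding every facet outside $F=F_{k_v}(v)\cup\{e\}$, that is, passing to $P_{\mathcal{M}^*|F,e}$. You then need two facts: every vertex of the $k_v$-ball around $v$ survives, and no shorter paths to these vertices appear. The paper proves this as \cref{lem:restriction-distance}. It lifts edges of the restricted polyhedron back to $G(P)$ one at a time, using circuit elimination and the connectivity result of \cite{CK89}. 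This does not follow merely from edges and faces being covector intervals.
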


As a first concrete positive example, we study graphic oriented matroids. Here a tree $T$ of a digraph $D$ is $e$-embracing if the fundamental cycle $C(T,e)$ is positively directed. In the graphic case, \cref{conj:general} holds, and the exchange sequence can moreover be chosen monotone with respect to the target basis:

\begin{restatable}{theorem}{graphic}
\label{thm:graphic}
Let $\M=(E,\mathcal{C})$ be a graphic oriented matroid of rank $r$, $e\in E$, and let $B, B'$ be $e$-embracing bases of $\M$. Then $d_e(B,B')\leq r$.
\end{restatable}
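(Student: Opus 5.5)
We argue by induction on $k=|B\setminus B'|$ and aim for a stronger, monotone statement: whenever $B\neq B'$ there are $f\in B'\setminus B$ and $g\in B\setminus B'$ such that $B-g+f$ is again $e$-embracing. Then $|(B-g+f)\setminus B'|=k-1$, so we reach $B'$ in exactly $|B\setminus B'|\le r$ steps. We work in a connected digraph $D$ (other components are handled independently) with $e=(t,s)$. A spanning tree $T\not\ni e$ is $e$-embracing exactly when the $s$--$t$ path $P_T$ in $T$ is a directed $s\to t$ path. Write $P=P_B$ and $P'=P_{B'}$.

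\emph{Case 1: $P'\subseteq B$.} The tree path is unique, so $P=P'\subseteq B\cap B'$. Contract $P$: then $B/P$ and $B'/P$ are bases of $\M/P$, and ordinary basis exchange gives $f\in B'\setminus B$ and $g\in B\setminus B'$ with $B-g+f$ a basis. Since $g\notin P$, the path $P$ survives in $B-g+f$, so this tree is still $e$-embracing. This case matches the first part of \cref{thm:main}, where the fundamental circuit does not change.

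\emph{Case 2: $P'\not\subseteq B$.} Walk along $P'$ from $s$ and let $f=(u,v)$ be the first edge of $P'$ not in $B$. The prefix $P'[s,u]$ lies in $B$, so it is the tree path from $s$ to $u$ and is directed. Adding $f$ to $B$ creates the cycle $Z=f+B[u,v]$. Because $B'$ is acyclic and contains $f$, the set $Z\cap(B\setminus B')$ is nonempty, and we must choose $g$ from it.
\begin{itemize}
\item If some such $g$ lies off $P$, then $P$ survives in $B-g+f$ and we are done.
\item Otherwise every candidate lies on $P$. The new $s$--$t$ path in $B-g+f$ is then $P'[s,u]$, followed by $f$, followed by the tree path from $v$ to $t$ avoiding $g$. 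We need this path to be directed.
\end{itemize}

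To control the second subcase, let $a$ be the last vertex common to $P$ and $P'[s,u]$, and let $b$ be the vertex where $B[v,\cdot]$ meets $P$. We would choose $g$ on the segment of $P$ between $a$ and $b$. The aim is to show that then $P[b,t]$ gives a directed continuation from $v$. For this we use that $P'$ is a directed path to $t$ and that $P'[s,u]\subseteq B$. If the orientation of $B[v,b]$ is wrong, we instead replace $f$ by a later edge of $P'$ not in $B$ and iterate, moving along $P'$ towards $t$. This terminates because the last edge of $P'$ enters $t$, where $P$ also ends.

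We expect this choice of $(f,g)$ in Case 2 to be the main obstacle. It needs a careful analysis of how the directed paths $P$ and $P'$ interleave with the tree structure of $B$, together with an extremal choice of $f$ (first or last edge of $P'\setminus B$ with a suitable reachability property). If a direct exchange fails there, a fallback is a potential argument: show that one can always increase $|P_{B_i}\cap P'|$, keeping $|B_i\setminus B'|$ non-increasing, until Case 1 applies.
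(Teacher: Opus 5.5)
Your strengthened claim is false: that for $B\neq B'$ there are always $f\in B'\setminus B$ and $g\in B\setminus B'$ with $B-g+f$ still $e$-embracing, so that $d_e(B,B')=|B\setminus B'|$. Hence the induction on $|B\setminus B'|$ cannot be completed. The paper's \cref{ex:graphic-not-shortest} is a counterexample. Take source $B=\{su,uw,vw,ut\}$ and target $B'=\{sv,uw,vw,vt\}$, so $P=(su,ut)$, $P'=(sv,vt)$, and $w$ is a sink. Consider the four exchanges with $g\in\{su,ut\}$ and $f\in\{sv,vt\}$. Two of them do not give a spanning tree. The other two give the $s$--$t$ paths $s,v,w,u,t$ and $s,u,w,v,t$, which traverse $uw$ and $vw$, respectively, against their orientation. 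Indeed $d_e(B,B')=3>2$.

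Your Case 2 hits exactly this. The choice $f=sv$ forces $g=su\in P$, and the continuation from $v$ runs backwards along $uw$. Passing to the later edge $f=vt$ forces $g=ut$ and fails symmetrically. So the iteration along $P'$ ends without a valid exchange, and no extremal choice of $f$ can repair this. Your fallback potential fails on the same example. The only directed $s$--$t$ paths in this digraph are $(su,ut)$ and $(sv,vt)$, so no single exchange from $B$ can increase $|P_{B_1}\cap P'|$.

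The missing idea is that you must sometimes delete an arc of $B\cap B'$. You then bound the number of such non-improving steps by $|P'|$, instead of trying to finish in $|B\setminus B'|$ steps. The paper inserts the arcs $f_1,\dots,f_k$ of $P'$, whose vertices are $s=v_0,v_1,\dots,v_k=t$, in this order. Its invariant is that the current tree $T$ contains the prefix $P'[s,v_{i-1}]$ as an arc set. This prefix need not lie on $T$'s own $s$--$t$ path, which is why your potential is the wrong one.

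For $f_i=v_{i-1}v_i\notin T$, the paper deletes the last arc $g$ of $T[v_{i-1},v_i]$ that is not on $P'[v_i,t]$. Then $T-g+f_i$ contains $P'[s,v_j]$ for some $j\geq i$. It is $st$-embracing because either $g\notin T[s,t]$, or $g$ enters $v_j$ and $P'[s,v_j]$ followed by $T[v_j,t]$ is a directed $s$--$t$ path. In the example, the first step deletes $vw\in B\cap B'$.

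This phase takes at most $|P'|$ exchanges. Afterwards your Case 1, which is correct, finishes in at most $r-|P'|$ exchanges, because the tree now contains $P'\subseteq B'$. This gives $d_e(B,B')\leq r$, although in general not $|B\setminus B'|$.
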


It would be natural to try and extend \cref{thm:graphic} to cographic or regular matroids and would be interested in seeing any progress into that direction..

Finally, we study Lawrence oriented matroids~\cite{lawrence1981unions,RS88}, i.e.,  uniform oriented matroids obtained as the union of uniform oriented matroids of rank $1$. These oriented matroids were used in \cite{R01} to provide upper bounds for McMullen's problem~\cite{L72}, in a generalization to $k$-neighborly polytopes \cite{GL15} and in relation to Roudneff's conjecture \cite{Rou91} in \cite{hernandez2025new,MR15}.
Here we prove \cref{conj:general} for these oriented matroids:

\begin{restatable}{theorem}{LOM}
\label{LOMsdiamr}
Let $\M=(E,\mathcal{C})$ be a Lawrence oriented matroid of rank $r$, $e\in E$, and let $B, B'$ be $e$-embracing bases of $\M$. Then $d_e(B,B')\leq r$.
\end{restatable}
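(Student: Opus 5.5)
Since Lawrence oriented matroids are uniform, every circuit, and in particular every $e$-positive circuit, has size $r+1$. By \cref{thm:main} (the equality case) it therefore suffices to show that the graph $G(P_{\mathcal{M}^*,e})$ has diameter at most $r$ on the $e$-positive circuits. Even better, I would produce an explicit exchange sequence of embracing bases, which gives the bound on $d_e$ directly. The plan is to translate everything into a purely combinatorial description of $e$-positive circuits and then exchange greedily.

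\textbf{Step 1: a combinatorial description of positive circuits.} I would use the standard description of a Lawrence oriented matroid of rank $r$ on a linearly ordered ground set $E$ via an $r\times |E|$ sign matrix $A$, with chirotope
\[
\chi(b_1<\dots<b_r)=\prod_{i=1}^r A_{i,b_i}.
\]
For a circuit with support $S=\{s_0<\dots<s_r\}$ the signs satisfy $X(s_k)=\pm(-1)^k\chi(S\setminus s_k)$. The sets $S\setminus s_k$ and $S\setminus s_{k+1}$ differ only in row $k$, where $s_{k+1}$ is replaced by $s_k$. From this I get
\[
X(s_k)X(s_{k+1})=-A_{k,s_k}A_{k,s_{k+1}}.
\]
Hence $X$ is positive if and only if $A_{k,s_k}\neq A_{k,s_{k+1}}$ for every $k=0,\dots,r-1$ (indexing rows from $0$). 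So an $e$-embracing basis is exactly a set $B$ with $e\notin B$ such that $B\cup\{e\}$, listed increasingly, has a sign change in row $k$ between its $k$-th and $(k+1)$-st elements, for every $k$. I will call such a sequence \emph{alternating}.

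\textbf{Step 2: a greedy exchange.} Write $S=B\cup e$ and $S'=B'\cup e$ as alternating sequences $s_0<\dots<s_r$ and $s'_0<\dots<s'_r$, both containing $e$. I would transform $S$ into $S'$ by fixing positions one at a time. At each step I replace one element of the current sequence by the corresponding element of $S'$ and keep $e$ throughout. Each step is one basis exchange, and $e$ occupies a position in both sequences, so at most $r$ positions actually need changing. This yields at most $r$ steps, provided every intermediate sequence is alternating.

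The local condition at a position only involves that position's neighbours and the two rows adjacent to it. So I would run the replacement so that, at each moment, the current sequence is a prefix of $S'$ followed by a suffix of $S$, possibly in a merged order. Alternation then holds inside the prefix and inside the suffix, and has to be checked only at the junction. To handle the junction I would choose the processing order adaptively: sweep from the left when $s'_k\le s_k$ at the junction, and from the right otherwise. This mirrors the monotone strategy suggested for \cref{thm:graphic}.

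\textbf{Main obstacle.} I expect the hardest step to be the junction condition together with the fixed element $e$. Two problems arise. First, the positions of $e$ in $S$ and in $S'$ may differ, so the indices shift. Second, replacing $s_k$ by $s'_k$ can break the increasing order, which relabels all rows in between. To deal with this, I would first try a case analysis on the relative order of $s_k$ and $s'_k$. If it fails, the fallback is an induction on $r$ by contraction or deletion of the first element: a minor of a Lawrence oriented matroid is again of Lawrence type, since deleting the first column or the first row preserves the product form. In that induction, one exchange would be spent to make the first elements agree, and the induction hypothesis in rank $r-1$ would handle the rest.
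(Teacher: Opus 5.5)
Your reduction to the diameter of $G(P_{\mathcal{M}^*,e})$ via the equality case of \cref{thm:main} is correct, and so is your alternation criterion, which is exactly \cref{positivecircuit}. The gap is Step~2. All of the theorem's content sits in the proviso ``provided every intermediate sequence is alternating''. The mechanism you propose for it fails already in rank $2$ with $e$ in the same position: you trade $s_k$ for its positional counterpart $s'_k$, so every intermediate set is a prefix of one sequence spliced with a suffix of the other. Take $E=\{1,\dots,5\}$, $e=1$, and the sign matrix with first row $(+,-,-,+,+)$ and second row $(+,-,+,-,+)$. Then $\{1,3,4\}$ and $\{1,2,5\}$ are alternating, so $B=\{3,4\}$ and $B'=\{2,5\}$ are $e$-embracing. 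However, both positional replacements $\{1,2,4\}$ and $\{1,3,5\}$ violate the condition in the second row, and $\{1,4,5\}$ violates it in the first. The only embracing sequence of length $2$ is $\{3,4\},\{2,3\},\{2,5\}$. Its first step inserts $2$ but deletes $4$, not $3$, and the middle set $\{1,2,3\}$ is of prefix/suffix form in neither direction. This is the phenomenon behind \cref{intercambio}. When $y$ is inserted, the element that leaves is dictated by the sign pattern, not by its position: follow $x_{i+1}^{i+1}(y),x_{i+1}^{i+2}(y),\dots$ and drop the element that gets skipped. So your count of one exchange per position has no basis. The paper's substitute is \cref{e_primero_ultimo}. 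When $e$ lies in the common initial segment, it bounds the distance by $r+2-j$, with $j$ the first index where the sequences differ. The induction always inserts the smaller of $x_j,y_j$ into the other circuit, and so builds the path from both ends (Example~\ref{example1}).

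Second, you rightly flag the case where $e$ occupies different positions as the main obstacle, but you leave it open, and your fallback does not close it. Its key step, a single exchange that makes the first elements agree while keeping $e$, need not exist. Suppose $\underline{C}=\{x_1<\dots<x_r<e\}$ and $\underline{C'}=\{e<y'_1<\dots<y'_r\}$ with $r\geq 2$. Making $e$ the first element of $C$ costs $r$ deletions. Inserting $x_1$ into $C'$ while keeping $e$ makes $x_1,e$ the first two elements, so it requires $a_{1,x_1}\neq a_{1,e}$, which neither circuit implies. The mirrored step at the last elements fails in the same way, and $e$ itself can never be contracted.

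What is missing is the paper's construction for $s>s'$, where $s,s'$ are the numbers of elements preceding $e$ in $C$ and $C'$. Using \cref{cambioeposition}, one builds nested alternating chains $\{e\}\subset\underline{X_1}\subset\cdots\subset\underline{X_s}$ in the blocks $A^{\le}_i$ and $\{e\}\subset\underline{Y'_r}\subset\cdots\subset\underline{Y'_{s'+1}}$ in the blocks $A^{\ge}_i$. One then walks along $Z_i=X_i\cup Y'_{i+1}$ for $i=s,s-1,\dots,s'$. Each exchange moves $e$ one slot to the left, and all sign conditions are inherited from the two chains. Joined to $C$ and $C'$ by \cref{e_primero_ultimo}, this gives a path of length at most $q+(s-s')+s'=r$.
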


\paragraph{Relation to previous versions.}

The present paper combines material from two earlier preprints. Bérczi and Nádor~\cite{BN25} introduced the oriented-matroid formulation of the embracing exchange problem and proved the graphic case. Knauer and Montejano~\cite{knauer2026embracing} developed the connection with oriented matroid polyhedra, obtained the counterexamples and the general upper bound, and proved the result for Lawrence oriented matroids. The present four-author version unifies these results in a common framework and includes several revisions and strengthenings.

\section{General bounds}

For basic notions on oriented matroids we refer to~\cite{BLSWZ99,and25,ziegler2024oriented}. 
Following~\cite[Definition 10.1.1]{BLSWZ99}, given an oriented matroid $\mathcal{M}=(E,\mathcal{L})$ with covectors $\mathcal{L}$ and $e\in E$ the \emph{oriented matroid polyhedron} is $P_{\mathcal{M},e}=\{X\in \mathcal{L}\mid \forall_{f\in E}:X(f)\geq 0  \text{ and }X_e=+\}$. The vertices of $P_{\mathcal{M},e}$ correspond to non-negative cocircuits $\mathcal{C}^{*+}_e$ of $\mathcal{M}$ that have positive $e$-coordinate. The graph $G(P_{\mathcal{M},e})$ is the $1$-skeleton of $P_{\mathcal{M},e}$. In particular, it has the same vertices and edges and can be viewed within the big face lattice of $\mathcal{M}$. The latter suffices to show that   $G(P_{\mathcal{M},e})$ is connected, see \cite[Lemma 3.2]{CK89}. 

For the sake of this paper we restate the adjacency in oriented matroid polyhedra in the dual setting.


\begin{definition}\label{lem:combi}
Let $\mathcal{M}=(E,\mathcal{C})$ be an oriented matroid and let $e\in E$. Then the vertex set of $G(P_{\mathcal{M}^*,e})$ is $\mathcal{C}_e^+$, where $X,X'\in\mathcal{C}_e^+$ are adjacent if and only if $Z
\in\mathcal{C}^+$ and $\underline{Z}\subseteq\underline{X}\cup\underline{X'}$ implies $Z\in\{X,X'\}$.
\end{definition}


\begin{lemma}\label{lem:lb}
Let $B$ and $B'$ be $e$-embracing bases of $\mathcal{M}$  with $|B\Delta B'|=2$. If $Z\in \mathcal{C}^+$ with $\underline{Z}\subseteq \underline{C(B,e)}\cup \underline{C(B',e)}$ then $Z=  C(B,e)$ or $Z=C(B',e)$. 
\end{lemma}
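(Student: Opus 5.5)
Write $B'=B-f+g$ with $f\in B$, $g\notin B$, and set $X=C(B,e)$ and $X'=C(B',e)$. Then $\underline{X}\subseteq B\cup\{e\}$ and $\underline{X'}\subseteq B'\cup\{e\}$, so $\underline{X}\cup\underline{X'}\subseteq B\cup\{e,g\}$. The plan is to show that every positive circuit $Z$ supported there equals $X$ or $X'$.

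We may assume $e\notin B\cup B'$ and $g\neq e$. Let $Z\in\mathcal{C}^+$ with $\underline Z\subseteq \underline X\cup\underline{X'}$. We split into two cases according to whether $g\in\underline Z$.

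\textbf{Case $g\notin\underline Z$.} Then $\underline Z\subseteq B\cup\{e\}$. Since $B$ is independent, $\underline Z\not\subseteq B$, so $e\in\underline Z$. The unique circuit supported in $B\cup\{e\}$ containing $e$ is, up to sign, $X$; positivity of $Z$ then forces $Z=X$.

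\textbf{Case $g\in\underline Z$.} Suppose first that $f\notin\underline Z$. Then $\underline Z\subseteq (B\setminus\{f\})\cup\{e,g\}=B'\cup\{e\}$, and the same argument as in the first case gives $Z=X'$.

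The remaining subcase, $f,g\in\underline Z$, is the main obstacle. It requires $f\in\underline X$, since $f\notin B'\cup\{e\}\supseteq\underline{X'}$. Note also $e\in\underline{X}$ and $e\in\underline{X'}$. I would use circuit elimination, which produces circuits that are positive in most coordinates, and look for a contradiction with minimality or independence.

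\begin{itemize}
\item If $e\notin\underline Z$: eliminate $f$ between $Z$ and $X$, both positive on $f$. Eliminating the positive coordinate $f$ between two positive vectors is not directly allowed, because elimination needs opposite signs. So instead consider $-X$ and $Z$, which have opposite signs on $f$. This gives a circuit $W$ with $W(f)=0$, $\underline W\subseteq \underline Z\cup\underline X\setminus\{f\}\subseteq B'\cup\{e\}$, and $W^+\subseteq Z^+\cup X^-$, $W^-\subseteq Z^-\cup X^+$. Hence $W$ is supported in $B'\cup\{e\}$. By the first-case reasoning, $W=\pm X'$, or $W$ is empty. The element $g$ lies in $\underline Z$ but not in $\underline X$, so elimination can be chosen to keep $g$ (strong elimination). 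Thus $W=\pm X'$ with $W(g)=Z(g)=+$, which gives $W=X'$. Comparing signs on $\underline X\cap\underline Z$, the entries of $W$ are opposite in sign to $X$ on elements outside $\underline Z$. I would then aim to derive that $X'$ has a negative entry, contradicting $X'\in\mathcal{C}^+_e$.
\item If $e\in\underline Z$: use the same elimination of $f$ between $Z$ and $-X$, but now eliminating $e$ as well causes sign issues. Instead, note that $\underline Z\cup\underline X$ spans $B\cup\{e,g\}$ restricted. By Farkas-type arguments (the composition $X\circ Z$ restricted is positive), conformal decomposition of the positive vector $X\circ Z$ or of $Z$ relative to the rank-$(|\underline Z|-1)$ restriction gives a positive circuit avoiding $f$. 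That circuit is supported in $B'\cup\{e\}$, so it equals $X'$, and then $\underline{X'}\subsetneq\underline Z$ unless $Z=X'$, contradicting that $Z$ is a circuit (incomparability of circuit supports).
\end{itemize}

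In both subcases the key step I expect to be delicate is tracking signs in the elimination to guarantee the produced circuit is positive. For this I would rely on the standard fact that, for positive $Z$ and positive $X$ sharing $f$, one can eliminate a common element between $Z$ and a suitable multiple-free combination. If that fails, I would fall back on the polyhedral viewpoint: an edge of $G(P_{\mathcal M^*,e})$ corresponds to a rank-one contraction, and $B\cap B'$ contracted leaves a rank-one situation on $\{e,f,g\}$ where positive circuits are easily enumerated.
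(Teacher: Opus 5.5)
Your cases $g\notin\underline Z$ and $f\notin\underline Z$ are correct. In the subcase $e\notin\underline Z$, your elimination of $f$ between $Z$ and $-X$ does finish: $e\in\underline{X'}=\underline W$ but $e\notin\underline Z\supseteq W^+$, so $W(e)=-$, i.e.\ $X'(e)=-$, a contradiction.

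The genuine gap is the subcase $e,f,g\in\underline Z$. The ``sign issues'' at $e$ that made you abandon the elimination there are not a real obstruction. The replacement you offer does not work:
\begin{itemize}
\item A conformal decomposition of the positive vector $X\circ Z$ produces positive circuits covering all of $\underline X\cup\underline Z$. Nothing forces any of them to avoid $f$; for instance $X\circ Z$ is just the composition of $X$ and $Z$, both containing $f$.
\item A ``conformal decomposition'' of the circuit $Z$ is $Z$ itself.
\item The rank-one fallback fails exactly in this subcase. In $(\mathcal M|_{B\cup\{e,g\}})/(B\cap B')$ on $\{e,f,g\}$, the circuits $X$ and $X'$ restrict to the positive circuits on $\{e,f\}$ and $\{e,g\}$. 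Their composition is a positive vector on $\{e,f,g\}$, so the minor cannot rule out a positive $Z$ with $e,f,g\in\underline Z$.
\end{itemize}

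The missing observation is that the same weak elimination works wherever $e$ sits, with no sign tracking and no strong elimination. Since $g\in\underline Z\setminus\underline X$, you have $Z\neq X$. Eliminating $f$ between $Z$ and $-X$ gives a circuit $W$ with $W^+\subseteq\underline Z\setminus\{f\}$, $W^-\subseteq\underline X\setminus\{f\}$ and $\underline W\subseteq B'\cup\{e\}$, hence $W=\pm X'$.
\begin{itemize}
\item If $W=X'$, then $\underline{X'}\subseteq\underline Z\setminus\{f\}\subsetneq\underline Z$.
\item If $W=-X'$, then $\underline{X'}\subseteq\underline X$, which is impossible since $g\in\underline{X'}\setminus\underline X$.
\end{itemize}
Both cases contradict incomparability of circuit supports, so this subcase is empty, and the split according to $e$ is unnecessary. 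This is exactly the paper's argument: it eliminates $f$ between $C(B,e)$ and $-Z$ and concludes by incomparability.
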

\begin{proof}
Let $B\setminus B'=\{f\}$. We start with an easy observation that we will use a couple of times.
\begin{claim}\label{claim:trivial}
If $Z\in \mathcal{C}$ with $\underline{Z}\subseteq \underline{C(B,e)}\cup \underline{C(B',e)}$ and $f\notin \underline{Z}$, then $Z=\pm C(B',e)$.
\end{claim}
\begin{claimproof}
    If $f\notin \underline{Z}$, then $\underline{Z}\subseteq B'\cup \{e\}$ concluding that $Z=\pm C(B',e)$ because $\underline{C(B',e)}$ is the unique circuit-support in $B'\cup \{e\}$.
\end{claimproof}

Now, let $Z\in \mathcal{C}^+$ with $\underline{Z}\subseteq \underline{C(B,e)}\cup \underline{C(B',e)}$. By \cref{claim:trivial} if $f\notin\underline{Z}$, then $\underline{Z}\subseteq B'\cup \{e\}$ concluding that $Z=C(B',e)$ and we are done. Again, by \cref{claim:trivial} if $f\notin\underline{C(B,e)}$, then $C(B,e)=C(B',e)$ and $Z=C(B,e)=C(B',e)$.

Hence, $f\in \underline{Z}\cap \underline{C(B,e)}$. Applying weak elimination to $C(B,e)$ and $-Z$ with respect to $f$, we obtain a circuit $\widetilde Z$
with $f\notin\underline{\widetilde Z}$ such that
$
\widetilde Z^+\subseteq \underline{C(B,e)}\setminus\{f\}$,
$\widetilde Z^-\subseteq \underline Z\setminus\{f\}$.
By \cref{claim:trivial}, $\widetilde Z=\pm C(B',e)$.

If $\widetilde Z=C(B',e)$, then $
\underline{C(B',e)}\subseteq\underline{C(B,e)}$.
Circuit incomparability gives $C(B',e)=C(B,e)$, and we are done.

If $\widetilde Z=-C(B',e)$, then
$\underline{C(B',e)}\subseteq\underline Z$. 
Again by circuit incomparability the two supports are equal, and since both $Z$ and $C(B',e)$ are
positive circuits, $Z=C(B',e)$. This concludes the proof.
\end{proof}

\begin{lemma}\label{lem:onlytwo}
  Let $X, Y\in \mathcal{C}^+$ be the only two positive circuits of $\mathcal{M}$ whose support is contained in $\underline{X}\cup\underline{Y}$. Then every mixed circuit $Z$ with $\underline{Z}\subseteq \underline{X}\cup\underline{Y}$ satisfies $\underline{Z}\supseteq \underline{X}\Delta\underline{Y}$.
\end{lemma}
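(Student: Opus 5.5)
The plan is to argue by contradiction, using circuit elimination to turn a mixed circuit into a positive circuit that cannot be $X$ or $Y$. Suppose $Z$ is a mixed circuit with $\underline{Z}\subseteq\underline{X}\cup\underline{Y}$ that misses some $g\in\underline{X}\Delta\underline{Y}$. By symmetry we may assume $g\in\underline{X}\setminus\underline{Y}$. Since $-Z$ is also mixed and also misses $g$, we may replace $Z$ by $-Z$ whenever convenient.

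The core step is a cleaning procedure that removes the negative entries of $Z$ one at a time. It eliminates against $Y$ wherever possible, because $g\notin\underline{Y}$ means such eliminations never bring $g$ back. We maintain a circuit $W$ with the following properties:
\begin{itemize}
\item $\underline{W}\subseteq\underline{X}\cup\underline{Y}$;
\item $W^-\subseteq Z^-$;
\item $W^+\cap Z^+\neq\emptyset$, witnessed by a fixed element $p$.
\end{itemize}
At each step we pick $h\in W^-$ and apply strong elimination of $W$ and $X$, or of $W$ and $Y$, at $h$, using whichever of $X,Y$ contains $h$. Strong elimination lets us keep $p$, since $p$ is either absent from the eliminating positive circuit or positive there, as it is in $W$. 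The new circuit has negative part contained in $W^-\setminus\{h\}$, and its support stays inside $\underline{X}\cup\underline{Y}$. So $|W^-|$ strictly decreases, and after finitely many steps we reach a positive circuit $W^\ast$ with $p\in\underline{W^\ast}$. By hypothesis $W^\ast\in\{X,Y\}$.

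The contradiction must come from the support of $W^\ast$. If every negative element of $Z$ lies in $\underline{Y}$, we only ever eliminate with $Y$, so $g\notin\underline{W^\ast}$ and hence $W^\ast=Y$. The remaining task is to exclude $W^\ast=Y$ in this case. Here we use the freedom in choosing $p$ and in choosing between $Z$ and $-Z$, together with circuit incomparability: a positive element of $Z$ lying outside $\underline{Y}$ must lie in $\underline{X}\setminus\underline{Y}$, and if we keep such a $p$ then $W^\ast\neq Y$. If no such $p$ exists for either $Z$ or $-Z$, then $\underline{Z}\subseteq\underline{Y}$. Circuit incomparability then gives $\underline{Z}=\underline{Y}$, so $Z=\pm Y$, which contradicts $Z$ being mixed.

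The case where some negative element of $Z$ (or of $-Z$) lies in $\underline{X}\setminus\underline{Y}$ is the main obstacle. There the elimination with $X$ may reintroduce $g$, and then $W^\ast=X$ is not immediately contradictory. In that case we try a second strong elimination of $Z$ and $X$ at such an $h$, this time keeping $g$. This gives a circuit $Z'$ containing $g$ with $Z'^{-}\subsetneq Z^-$, and we then compare $Z'$ with $Z$ (which lacks $g$) by one more elimination at $g$ against $X$. The goal of that last elimination is to produce a circuit whose support lies strictly inside $\underline{X}$, contradicting incomparability, or a mixed circuit with strictly fewer negative elements that still misses an element of $\underline{X}\Delta\underline{Y}$. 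The second outcome is what makes the argument close: we set it up as an induction on $|Z^-|$, applying the induction to that smaller mixed circuit, with the case $Z^-\subseteq\underline{Y}$ treated above as the base.
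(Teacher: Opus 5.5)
\textbf{Gap in the main case.} Your handling of the case $Z^-\subseteq\underline{Y}$ is correct. You eliminate only against $Y$ while preserving some $p\in Z^+\setminus\underline{Y}$; such a $p$ exists, since otherwise $\underline{Z}\subseteq\underline{Y}$. The procedure ends in a positive circuit that contains $p$, so it is not $Y$, and avoids $g$, so it is not $X$. The gap is in the remaining case, which is the actual inductive step. The proposed ``one more elimination at $g$ against $X$'' is not available. Since $g\notin\underline{Z}$ and $g\in X^{+}$, the strong elimination of $Z$ and $X$ at $h$ preserving $g$ gives $g\in Z'^{+}$. So $Z'$ and $X$ have the same sign at $g$ and cannot be eliminated there, and $Z$ does not contain $g$ at all. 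You also give no argument that one of your two announced outcomes must occur: a circuit strictly inside $\underline{X}$, or a smaller mixed circuit missing an element of $\underline{X}\Delta\underline{Y}$. As written, the induction does not close.

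\textbf{The fix.} The missing observation is that $Z'$ itself is already the circuit you need. Take $h\in Z^-\setminus\underline{Y}\subseteq\underline{X}\setminus\underline{Y}$. Then $g\in Z'^{+}$, $Z'^{-}\subseteq Z^-\setminus\{h\}$ and $\underline{Z'}\subseteq\underline{X}\cup\underline{Y}$. If $Z'$ were positive, it would differ from $X$ (because $h\notin\underline{Z'}$) and from $Y$ (because $g\in\underline{Z'}\setminus\underline{Y}$), contradicting the hypothesis. It is not negative, since $g\in Z'^{+}$. Hence $Z'$ is mixed, $|Z'^{-}|<|Z^-|$, and $Z'$ misses $h\in\underline{X}\Delta\underline{Y}$, so strong induction on $|Z^-|$ applies to $Z'$ directly. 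Fix $Z$ and split only on whether $Z^-\subseteq\underline{Y}$; you never need to pass to $-Z$, which keeps the induction parameter well defined.

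\textbf{Comparison with the paper.} With this repair your route is a valid alternative to the paper's proof. The paper takes a mixed circuit with inclusion-minimal negative part and first shows, via weak elimination, that this part avoids $\underline{X}\cap\underline{Y}$. It then uses the same strong elimination preserving $g$ to get $\underline{X}\setminus\underline{Y}\subseteq Z^-$ or $\underline{Y}\setminus\underline{X}\subseteq Z^-$, and finally applies this to $-Z$. The paper's route also yields the sign pattern of every such $Z$ on $\underline{X}\Delta\underline{Y}$. Yours gives only the support containment the lemma asks for.
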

\begin{proof}
    Let $Z$ be a mixed circuit of $\M$ with $\underline{Z}\subseteq \underline{X}\cup\underline{Y}$. 
    \begin{claim}\label{claim:inclusion}
        $\underline{X}\setminus \underline{Y}\subseteq Z^-$
   or $\underline{Y}\setminus \underline{X}\subseteq Z^-$.
   
    \end{claim}
    \begin{claimproof}
    For the given mixed circuit $Z$, choose a mixed circuit $W$ with $\underline W\subseteq\underline X\cup\underline Y$ and $W^-\subseteq Z^-$ such that $W^-$ is inclusion-minimal. It is enough to prove the
claim for $W$, since either $\underline X\setminus\underline Y\subseteq W^-$ or $\underline Y\setminus\underline X\subseteq W^-$ implies the corresponding inclusion in $Z^-$. Replacing $Z$ by $W
$, we may therefore assume that $Z^-$ is inclusion-minimal.

    First, show that $Z^-\cap \underline{X}\cap \underline{Y}=\emptyset$. Indeed, if $f\in Z^-\cap\underline X\cap\underline Y$, then weak
elimination of $Z$ and $X$ with respect to $f$ gives a circuit
$\widetilde Z$ with
$\widetilde Z^-\subseteq Z^-\setminus\{f\}$ and
$\underline{\widetilde Z}\subseteq\underline X\cup\underline Y$.
By the minimality of $Z^-$, the circuit $\widetilde Z$ is not mixed.
Replacing it by $-\widetilde Z$ if necessary, we obtain a positive
circuit supported in $\underline X\cup\underline Y$. Since
$f\notin\underline{\widetilde Z}$ but
$f\in\underline X\cap\underline Y$, it is different from $X$ and
$Y$, a contradiction.

    So suppose without loss of generality, that there is some $f\in Z^-\cap (\underline{X}\setminus\underline{Y})$. If there was also $g\in(X\setminus Y)\setminus Z^-$, then strong elimination applied to $Z$ and $X$ with respect to $f$, preserving $g$, yields a circuit $\widetilde Z$ such that
    \[
    g\in\widetilde Z^+,\qquad
    \widetilde Z^-\subseteq Z^-\setminus\{f\},\qquad
    \widetilde Z\subseteq X\cup Y.
    \]
    By the inclusion-minimality of $Z^-$, the circuit $\widetilde Z$ cannot be mixed. Since $g\in\widetilde Z^+$, it is therefore positive. Moreover, $\widetilde Z\neq X$ because $f\notin\widetilde Z$, while $\widetilde Z\neq Y$ because $g\in\widetilde Z$ and $g\notin Y$. This is a contradiction, yielding $\underline{X}\setminus \underline{Y}\subseteq Z^-$.
    \end{claimproof}
    
    Thus, by \cref{claim:inclusion} we have without loss of generality that $\underline{X}\setminus \underline{Y}\subseteq Z^-$. Then for the mixed circuit $-Z$ we have $\underline{X}\setminus \underline{Y}\subseteq (-Z)^+$ and therefore by \cref{claim:inclusion} we must have  $\underline{Y}\setminus \underline{X}\subseteq (-Z)^-=Z^+$. In particular, we have $\underline{Z}\supseteq \underline{X}\Delta\underline{Y}$.
\end{proof}

We observe the following for future use:
\begin{lemma}\label{obs:uniformcircuit}
Let $\mathcal{M}=(E,\mathcal{C})$ be an oriented matroid of rank $r$, let $e\in E$, and let
$X,X'\in\mathcal{C}_e^+$ with
$|\underline{X}|=|\underline{X'}|=r+1$. Then $X$ and $X'$ are adjacent in
$G(P_{\mathcal{M}^*,e})$ if and only if
$|\underline{X}\triangle\underline{X'}|=2$.
\end{lemma}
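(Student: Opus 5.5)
The plan is to treat the two directions separately, using that a circuit of size $r+1$ in a rank $r$ oriented matroid is spanning: for any $a\in\underline{X}$, the set $\underline{X}\setminus\{a\}$ is a basis. Throughout, $X\neq X'$, as is implicit for adjacency in a graph. Since both are positive circuits, this forces $\underline{X}\neq\underline{X'}$. Indeed, a circuit is determined up to sign by its support, so $\underline{X}=\underline{X'}$ would give $X'=\pm X$, and positivity forces $X'=X$.

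For the direction ``$\Leftarrow$'', assume $|\underline{X}\triangle\underline{X'}|=2$. Since $e$ lies in both supports, I set $B=\underline{X}\setminus\{e\}$ and $B'=\underline{X'}\setminus\{e\}$. Both are bases by the spanning observation. Moreover $C(B,e)=X$ and $C(B',e)=X'$: the fundamental circuit is the unique circuit supported in $B\cup\{e\}$ with positive $e$-entry. As $X,X'\in\mathcal{C}_e^+$, both $B$ and $B'$ are $e$-embracing. Also $B\triangle B'=\underline{X}\triangle\underline{X'}$ has size $2$. Hence \cref{lem:lb} says that every $Z\in\mathcal{C}^+$ with $\underline{Z}\subseteq\underline{X}\cup\underline{X'}$ equals $X$ or $X'$. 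This is exactly adjacency in \cref{lem:combi}.

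For the direction ``$\Rightarrow$'', assume $X,X'$ are adjacent, so $X,X'$ are the only positive circuits supported in $U=\underline{X}\cup\underline{X'}$. Then \cref{lem:onlytwo} applies: every mixed circuit supported in $U$ contains $\underline{X}\triangle\underline{X'}$. Suppose for contradiction that $|\underline{X}\triangle\underline{X'}|\ge 4$. Since $|\underline{X}|=|\underline{X'}|$, we get $|\underline{X}\setminus\underline{X'}|\ge 2$, so we can pick distinct $x_1,x_2\in\underline{X}\setminus\underline{X'}$. Pick also some $y\in\underline{X'}\setminus\underline{X}$. Then $\underline{X'}\setminus\{y\}$ is a basis, and adding $x_1$ gives a fundamental circuit $Z$ with the following properties:
\begin{itemize}
\item $x_1\in\underline{Z}$;
\item $\underline{Z}\subseteq U$;
\item $y\notin\underline{Z}$ and $x_2\notin\underline{Z}$.
\end{itemize}
Since $\underline{Z}$ misses $x_2\in\underline{X}\triangle\underline{X'}$, $Z$ is not mixed, so $Z$ or $-Z$ is a positive circuit supported in $U$. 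But this positive circuit differs from $X'$ because it contains $x_1\notin\underline{X'}$. It differs from $X$ because it misses $x_2\in\underline{X}$. This contradicts adjacency. Hence $|\underline{X}\triangle\underline{X'}|=2$, the value $0$ being excluded by the opening remark.

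The only delicate step is the ``$\Rightarrow$'' construction. One must produce a circuit inside $U$ that provably avoids some element of the symmetric difference, so that \cref{lem:onlytwo} forces it to be signed. The fundamental circuit of $x_1$ with respect to the basis $\underline{X'}\setminus\{y\}$ does this, and it is precisely where the size assumption $r+1$ is used.
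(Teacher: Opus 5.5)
Your proof is correct and follows the paper's argument. The backward direction goes through \cref{lem:lb} and \cref{lem:combi}. The forward direction exhibits a circuit inside $\underline{X}\cup\underline{X'}$ that misses an element of $\underline{X}\triangle\underline{X'}$, so \cref{lem:onlytwo} forces it to be signed, contradicting adjacency. The only difference is cosmetic: you build that circuit as the fundamental circuit of $x_1$ in the basis $\underline{X'}\setminus\{y\}$, whereas the paper notes that $(\underline{X}\cup\underline{X'})\setminus\{x,y\}$ has at least $r+1$ elements and is therefore dependent.
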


\begin{proof}
Suppose first that
$|\underline{X}\triangle\underline{X'}|=2$. Since $X$ and $X'$ are
circuits of size $r+1$, the sets
$B=\underline{X}\setminus\{e\}$ and
$B'=\underline{X'}\setminus\{e\}$ are bases of $\mathcal{M}$, with
$C(B,e)=X$ and $C(B',e)=X'$. Moreover, $|B\triangle B'|=2$.
By \cref{lem:lb}, $X$ and $X'$ are the only positive circuits whose
supports are contained in $\underline{X}\cup\underline{X'}$.
Hence they are adjacent by \cref{lem:combi}.

Conversely, suppose that $X$ and $X'$ are adjacent. Since they have
the same size, set
$t=|\underline{X}\setminus\underline{X'}|
 =|\underline{X'}\setminus\underline{X}|$.
As $X\neq X'$, we have $t\geq 1$. Suppose for a contradiction that
$t\geq2$, and choose
$x\in\underline{X}\setminus\underline{X'}$ and
$y\in\underline{X'}\setminus\underline{X}$. Set
$S=(\underline{X}\cup\underline{X'})\setminus\{x,y\}$.
Then
$|S|=r-1+t\geq r+1$, so $S$ is dependent and contains a circuit $D$.

The circuit $D$ cannot be positive or negative. Indeed, otherwise
$D$ or $-D$ would be a positive circuit supported in
$\underline{X}\cup\underline{X'}$ different from $X$ and $X'$,
contradicting their adjacency and \cref{lem:combi}. Thus $D$ is
mixed. Since adjacency also says that $X$ and $X'$ are the only
positive circuits supported in their union, \cref{lem:onlytwo}
implies
$\underline{X}\triangle\underline{X'}\subseteq\underline{D}$.
This is impossible, since $x,y\in
\underline{X}\triangle\underline{X'}$ but $x,y\notin\underline{D}$.
Therefore $t=1$, and hence
$|\underline{X}\triangle\underline{X'}|=2$.
 \end{proof}

We are ready to prove our main theorem:

\main*
\begin{proof}
If $C(B,e)=C(B',e)=X$, then both $B$ and $B'$ contain $\underline X\setminus\{e\}$. After contracting $\underline X\setminus\{e\}$ in the underlying matroid, ordinary basis exchange gives a path of length $|B\setminus B'|$ from $B$ to $B'$. Lifting this path, every intermediate basis still contains $\underline X\setminus\{e\}$, and hence has fundamental circuit $X$. Thus, $d_e(B,B')=|B\setminus B'|$.

If $C(B,e)\neq C(B',e)$, then the lower bound is a direct consequence of \cref{lem:combi} and \cref{lem:lb}.  

For the upper bound in this case, let
$C(B,e)=X_0,X_1,\ldots,X_k=C(B',e)$ be a shortest path in
$G(P_{\mathcal{M}^*,e})$. Thus,
$k=d_{G(P_{\mathcal{M}^*,e})}(C(B,e),C(B',e))$.

Consider an $e$-embracing basis $\widetilde B$ with $C(\widetilde B,e)=\widetilde X\in\mathcal C_e^+$, and
let $\widetilde Y\in\mathcal C_e^+$ be adjacent to $\widetilde X$ in $G(P_{\mathcal M^*,e})$. 

First choose $f \in \underline{\widetilde{Y}}\setminus\underline{\widetilde{X}}$ with $f\notin \widetilde{B}$. Such an element exists because otherwise $\underline{\widetilde{Y}}\setminus{e}\subseteq\widetilde{B}$ and $\underline{\widetilde{Y}}$ would be a second circuit in $\widetilde{B}\cup\{e\}$ contradicting $C(\widetilde{B},e)=\widetilde{X}\neq \widetilde{Y}$. Now set $S=(\underline{\widetilde X}\cup\underline{\widetilde Y})\setminus\{e,f\}$. If $S$ contained a circuit $D$, then $D$ could not be positive or negative, because otherwise $D$ or $-D$
would be a positive circuit supported in $\underline{\widetilde X}\cup\underline{\widetilde Y}$ different from the two adjacent vertices. Thus
$D$ would be mixed, and \cref{lem:onlytwo} would force $\underline{\widetilde X}\Delta\underline{\widetilde Y}\subseteq \underline{D}$, contradicting $f\notin S$. Hence $S$ is
independent and we can augment $S$ with elements of $\widetilde{B}$ yielding $\widetilde{B}'$.
Both $\widetilde B$ and $\widetilde B'$ contain $\underline{\widetilde X}\setminus\{e\}$, and $|\widetilde
B'\setminus\widetilde B|\leq|\underline{\widetilde Y}\setminus\underline{\widetilde X}|-1$. Hence,
basis exchange after contracting $\underline{\widetilde X}\setminus\{e\}$ gives an $e$-embracing
exchange sequence from $\widetilde B$ to $\widetilde B'$ of length at most $|\underline{\widetilde Y
}\setminus\underline{\widetilde X}|-1$; every basis on this path has fundamental circuit $\widetilde
X$.

Moreover, $\widetilde{B}'\cup\{f\}$ contains a unique circuit, which must be one of the mixed circuits on $\underline{\widetilde{X}}\cup\underline{\widetilde{Y}}\setminus\{e\}$. Indeed, weak elimination between $\widetilde X$ and $-\widetilde Y$ with respect to $e$ yields a circuit $D$ with
\[
\underline D\subseteq(\underline{\widetilde X}\cup\underline{\widetilde Y})\setminus\{e\}.
\]
The circuit $D$ is mixed: otherwise $D$ or $-D$ would be a positive circuit supported in $\underline{\widetilde X}\cup\underline{\widetilde Y}$ different from $\widetilde X$ and $\widetilde Y$,
contradicting their adjacency. Since $(\underline{\widetilde X}\cup\underline{\widetilde Y})\setminus
\{e\}\subseteq\widetilde B'\cup\{f\}$ and $\widetilde B'\cup\{f\}$ contains a unique circuit, $D$ is
that circuit. By \cref{lem:onlytwo}, $D$ contains $\underline{\widetilde X}\Delta\underline{\widetilde Y}$. Hence, for any $g\in\underline{\widetilde X}\setminus\underline{\widetilde Y}$, the
set $\widetilde B'\cup\{f\}\setminus\{g\}$ is a basis. It contains $\underline{\widetilde Y}\setminus
\{e\}$, so its fundamental circuit with $e$ is $\widetilde Y$.
Since $C(\widetilde{B}'\cup\{f\}\setminus\{g\}, e)=\widetilde{Y}$, this basis is at $e$-embracing distance at most $|\underline{\widetilde Y}\setminus\underline{\widetilde X}|$ from $\widetilde{B}$. 

Applying this reasoning along the sequence
$C(B,e)=X_0,X_1,\ldots,X_k=C(B',e)$, we obtain an $e$-embracing
basis $B''$ with $C(B'',e)=X_k$ and
$d_e(B,B'')\leq
\sum_{i=1}^k|\underline{X_i}\setminus\underline{X_{i-1}}|$.
Finally, $C(B'',e)=C(B',e)=X_k$, so the same contraction argument as
in the first paragraph gives
$d_e(B'',B')\leq |B''\setminus B'|
\leq r-|\underline{X_k}\setminus\{e\}|$.

For $i<k$ we have
$|\underline{X_i}\setminus\underline{X_{i-1}}|\leq r$, while
$|\underline{X_k}\setminus\underline{X_{k-1}}|
\leq|\underline{X_k}\setminus\{e\}|$. Hence
$d_e(B,B')\leq (k-1)r+
|\underline{X_k}\setminus\{e\}|+
r-|\underline{X_k}\setminus\{e\}|=rk$, proving the upper bound.

If all circuits in $\mathcal{C}_e^+$ have size $r+1$, then
\cref{obs:uniformcircuit} gives
$|\underline{X_i}\setminus\underline{X_{i-1}}|=1$ for every
$i\in[k]$, while $|\underline{X_k}\setminus\{e\}|=r$. Thus the construction above gives $d_e(B,B')\leq k$, while the
lower bound gives $k\leq d_e(B,B')$. Hence $d_e(B,B')=k$.
\end{proof}

\lb*
\begin{proof}
By the results of Klee and Walkup~\cite{KW67} and Todd~\cite{Tod80},
for every $k\geq1$ there is an unbounded simple $4$-dimensional
polyhedron $Q_k$ with $4+4k$ facets and diameter $5k$,
see~\cite{ES10}. Add another hyperplane $H_e$ to obtain a polytope
$Q_k^+$, and consider $Q_k^+$ as a tope of a realizable oriented
matroid $\mathcal{M}$ of rank $5$ on $5+4k$ elements, with all
hyperplanes oriented towards this tope. Set
$\mathcal{N}=\mathcal{M}^*$. Then the $e$-positive circuits of
$\mathcal{N}$ correspond to the vertices of $Q_k$, and
$G(P_{\mathcal{N}^*,e})=G(P_{\mathcal{M},e})$ is the graph of
$Q_k$.

Since $Q_k$ is simple, every $e$-positive circuit of $\mathcal{N}$
has size $4k+1=\operatorname{rk}(\mathcal{N})+1$. Choose
$X,X'\in\mathcal{C}_e^+(\mathcal{N})$ corresponding to vertices of
$Q_k$ at distance $5k$. Then
$B=\underline{X}\setminus\{e\}$ and
$B'=\underline{X'}\setminus\{e\}$ are $e$-embracing bases of
$\mathcal{N}$ with $C(B,e)=X$ and $C(B',e)=X'$. By the equality
case of \cref{thm:main},
$d_e(B,B')=d_{G(P_{\mathcal{N}^*,e})}(X,X')=5k$.
Since $\operatorname{rk}(\mathcal{N})=4k$, this proves the result
when the rank is divisible by $4$.

Now let $r=4k+s$, where $s\in\{0,1,2,3\}$ and $k\geq1$. Take the
direct sum of $\mathcal{N}$ with $s$ oriented rank-one matroids whose
underlying matroids are $U_{1,2}$, and in each added summand choose
the two distinct singleton bases. The resulting bases are
$e$-embracing. Moreover, after deleting repetitions, the projection of any $e$-embracing
exchange sequence onto the first summand gives an $e$-embracing
exchange sequence from $B$ to $B'$, while each of the $s$ added
summands must be changed at least once. Hence their $e$-embracing
distance is at least
$5k+s=\lfloor 5r/4\rfloor$.

For $r\leq3$, take a positive loop $e$ together with $r$ copies of
$U_{1,2}$ and choose the two opposite bases in these rank-one
summands. Every basis is then $e$-embracing and the two chosen bases
have exchange distance $r=\lfloor5r/4\rfloor$.
\end{proof}


\begin{lemma}\label{lem:restriction-distance}
Let $\mathcal{M}=(E,\mathcal{L})$ be an oriented matroid, let $e\in E$,
and put $P=P_{\mathcal{M},e}$. Let $X$ be a vertex of $P$, let
$e\in F\subseteq E$, and let $k\geq0$. Suppose that every vertex $Z$
of $P$ with $d_{G(P)}(X,Z)\leq k$ satisfies $Z(f)\neq0$ for every
$f\in E\setminus F$. Then every such $Z$ restricts to a vertex of
$P_{\mathcal{M}|F,e}$ and
$d_{G(P_{\mathcal{M}|F,e})}(X|_F,Z|_F)\geq d_{G(P)}(X,Z)$.
\end{lemma}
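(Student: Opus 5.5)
The plan is to compare $G(P)$ and $G(P_{\mathcal M|F,e})$ locally around vertices that are at distance at most $k$ from $X$. We then run a breadth-first induction showing that every short path in the restricted polyhedron lifts to a path of the same length in $P$.

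\emph{Step 1 (vertices restrict to vertices).} Let $Z$ be a vertex of $P$ with $Z(f)\neq 0$ for all $f\notin F$. Then $Z^0\subseteq F$, and $Z^0$ is a hyperplane of $\mathcal M$. Since $e\in F$ and $Z(e)=+$, the element $e$ is not in the closure of $Z^0$, so $\operatorname{rk}(F)=r$. Hence $Z^0$ is a hyperplane of $\mathcal M|F$. The covector $Z|_F$ of $\mathcal M|F$ has zero set exactly this hyperplane, so it is a cocircuit of $\mathcal M|F$. It is nonnegative with $Z|_F(e)=+$, so it is a vertex of $P_{\mathcal M|F,e}$. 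Restriction is injective on such vertices, because $Z$ is determined by its zero set $Z^0\subseteq F$ together with the sign of $e$.

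\emph{Step 2 (local isomorphism of face lattices).} Fix such a $Z$. Consider the nonnegative covectors $T\geq Z$ of $\mathcal M$ and the nonnegative covectors $T'\geq Z|_F$ of $\mathcal M|F$. I claim that $T\mapsto T|_F$ is an order isomorphism between these two sets. Every $T\geq Z$ equals $Z(f)=+$ on each $f\notin F$, so restriction is injective. For surjectivity, write $T'=U|_F$ for some covector $U$ of $\mathcal M$. Then $Z\circ U$ is a covector of $\mathcal M$ that restricts to $Z|_F\circ T'=T'$, is $+$ outside $F$, and lies above $Z$. Consequently, the edges of $P$ at $Z$ (bounded edges and rays) correspond bijectively to the edges of $P_{\mathcal M|F,e}$ at $Z|_F$.

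\emph{Step 3 (lifting neighbours, the main obstacle).} Let $Z$ be as above with $d_{G(P)}(X,Z)<k$, and let $W_2$ be a neighbour of $Z|_F$ in the restricted graph, lying on the edge $T'$. By Step 2, $T'=T|_F$ for an edge $T$ of $P$ at $Z$. If $T$ is bounded with other endpoint $Z_2$, then $d_{G(P)}(X,Z_2)\leq k$. The hypothesis gives $Z_2(f)\neq 0$ for all $f\notin F$, so by Step 1 $Z_2|_F$ is a vertex lying below $T'$ and distinct from $Z|_F$. Since an edge has at most two vertices, $W_2=Z_2|_F$. The delicate case is when $T$ is a ray of $P$ but $T'$ is a bounded edge. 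To exclude it, I would contract $T^0$ (which lies in $F$, since $T\geq Z$ is nonzero outside $F$) and reduce to rank $2$. There the cocircuits below $T$ form a cyclic arrangement of a polygon's boundary. The endpoint $W_2$ of $T'$ comes from the first element $g\in F$ met when moving along $T$ away from $Z$. The same $g$ also bounds $T$ in $\mathcal M$ unless some element outside $F$ is met first, and such an element would produce a vertex of $P$ adjacent to $Z$. In either case $T$ has a second vertex. This is also what guarantees that the second endpoint in the bounded case is the one matching $W_2$.

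\emph{Step 4 (induction).} By induction on $j\leq k$, every vertex at restricted distance $j$ from $X|_F$ along a path of restrictions equals $Z|_F$ for some vertex $Z$ of $P$ with $d_{G(P)}(X,Z)\leq j$, using Step 3 at each step. Now take $Z$ with $d_{G(P)}(X,Z)\leq k$, and suppose $d_{G(P_{\mathcal M|F,e})}(X|_F,Z|_F)=m<d_{G(P)}(X,Z)$. Lifting a shortest restricted path produces a vertex $Z'$ with $Z'|_F=Z|_F$ and $d_{G(P)}(X,Z')\leq m$. Injectivity from Step 1 gives $Z'=Z$, a contradiction.
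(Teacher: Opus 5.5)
Your proof is correct, but it follows a genuinely different route from the paper's.

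\textbf{The paper's route.} The paper works in the dual, identifying vertices with $e$-positive circuits of $\mathcal{N}=\mathcal{M}^*$. It proves a local lifting claim with two alternatives: either a neighbour of $U$ restricts to the given neighbour $\overline V$ of $U|_F$, or some neighbour of $U$ vanishes on $D=E\setminus F$. To do this, it lifts $\overline V$ to a circuit $V_0$ of $\mathcal{N}$ and uses elimination to produce a positive circuit $V_1$ supported in $\underline U\cup\underline{V_0}$. It then invokes connectivity of the polyhedron graph of $\mathcal{N}|S$ (\cite[Lemma~3.2]{CK89}) to take the first step from $U$ towards $V_1$. Finally, the circuit characterization of Definition~\ref{lem:combi} applied in $\mathcal{N}/D$ identifies that step.

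\textbf{Your route.} You stay in the primal big face lattice. The key observation is that $T\mapsto T|_F$ is an order isomorphism between the nonnegative covectors above $Z$ and those above $Z|_F$. Hence edges at $Z$ and at $Z|_F$ correspond bijectively, and only the far endpoints need to be checked. This is more elementary, needing neither elimination nor the connectivity lemma. It also gives a stronger local statement: a bijection of all edges, rays included. The cost is that you rely on the face-lattice definition of the $1$-skeleton rather than the circuit description the paper uses throughout; the paper asserts these agree, so this is legitimate.

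\textbf{A simplification of your ray case.} Your rank-$2$ contraction argument works, but the case can be dismissed more quickly. Let $Z^*$ be the second cocircuit below $T$, and suppose $Z^*(e)=0$. Then $Z^*|_F\le T'$ and its $e$-coordinate is $0$. The only covectors below the edge $T'$ are $\mathbf 0$, $Z|_F$, $W_2$ and $T'$. All of them except $\mathbf 0$ have $e$-coordinate $+$ when $T'$ is bounded. This forces $Z^*|_F=\mathbf 0$, so $\underline{Z^*}\subseteq D\subseteq\underline Z$, which contradicts incomparability of cocircuit supports. This uses nothing beyond the ``two cocircuits per edge'' fact you already invoke in the bounded case.
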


\begin{proof}
Put $\mathcal{N}=\mathcal{M}^*$ and $D=E\setminus F$. By
\cref{lem:combi}, we may identify the vertices of $P$ with the
$e$-positive circuits of $\mathcal{N}$, and the vertices of
$P_{\mathcal{M}|F,e}$ with the $e$-positive circuits of
$\mathcal{N}/D=(\mathcal{M}|F)^*$.

If $U$ is a vertex of $P$ with no zero coordinate in $D$, then by the circuit description of contraction 
$D\subseteq\underline U$ and $U|_F$ is a circuit of $\mathcal{N}/D$.
Indeed, otherwise a circuit of $\mathcal{N}/D$ properly contained in
$\underline U\setminus D$ could be lifted to a circuit of
$\mathcal{N}$ properly contained in $\underline U$, contradicting
circuit incomparability. Thus $U|_F$ is a vertex of
$P_{\mathcal{M}|F,e}$.


We claim the following local lifting property. Let $U$ be a vertex of $P$ with $D\subseteq U$, and let $V$ be adjacent to $U|F$ in $G(P_{M|F,e})$. Then either there is a neighbor $\widetilde V$ of $U$ in $G(P)$ with $D\subseteq\widetilde V$ and $\widetilde V|F=V$, or $U$ has a neighbor $\widetilde V$ with $\widetilde V(d)=0$ for some $d\in D$.

To prove the claim, choose a circuit $V_0$ of $\mathcal{N}$ with
$V_0|_F=\overline V$ and orient it so that $(V_0)_e=+$. Thus
$V_0^-\subseteq D$. If $V_0$ is positive, put $V_1=V_0$. Otherwise,
choose a mixed circuit $W$ with $e\in W^+$,
$\underline W\subseteq\underline U\cup\underline{V_0}$ and
$W^-\subseteq V_0^-$ such that $W^-$ is inclusion-minimal. For
$d\in W^-$, strong elimination between $U$ and $W$ with respect to
$d$, preserving $e$, yields a circuit $V_1$ with $(V_1)_e=+$,
$d\notin\underline{V_1}$,
$\underline{V_1}\subseteq\underline U\cup\underline{V_0}$ and
$V_1^-\subseteq W^-\setminus\{d\}$. By minimality of $W^-$, the
circuit $V_1$ is positive.

Set $S=\underline U\cup\underline{V_0}$. The vertices $U$ and $V_1$
belong to $G(P_{(\mathcal{N}|S)^*,e})$, which is connected by
\cite[Lemma~3.2]{CK89}. Let $U_1$ be the first vertex after $U$ on a
path from $U$ to $V_1$. The vertices $U$ and $U_1$ are also adjacent
in $G(P)$: indeed, every positive circuit of $\mathcal{N}$ supported
in $\underline U\cup\underline{U_1}$ is contained in $S$, and hence
\cref{lem:combi} applied to $\mathcal{N}|S$ gives the assertion.

If $U_1(d)=0$ for some $d\in D$, the second alternative of the claim
holds. Otherwise $D\subseteq\underline{U_1}$, so $U_1|_F$ is an
$e$-positive circuit of $\mathcal{N}/D$ whose support is contained in
$\underline{U|_F}\cup\underline{\overline V}$. Since $U|_F$ and
$\overline V$ are adjacent, \cref{lem:combi} gives
$U_1|_F\in\{U|_F,\overline V\}$. The first possibility would imply
$U_1=U$, since both positive circuits contain all of $D$. Hence
$U_1|_F=\overline V$, proving the claim.

Now let $Z$ be a vertex with $\ell=d_{G(P)}(X,Z)\leq k$, and suppose
for a contradiction that there is a path of length $m<\ell$ from
$X|_F$ to $Z|_F$ in $G(P_{\mathcal{M}|F,e})$. Starting with $X$,
lift its edges successively using the claim. If the second alternative
occurs after $j$ lifted edges, then $X$ has a path of length at most
$j+1\leq m<\ell\leq k$ to a vertex having a zero coordinate in $D$,
contrary to the hypothesis. Thus every edge lifts, and we obtain a
path of length $m$ in $G(P)$ from $X$ to a vertex $Z'$ with
$D\subseteq\underline{Z'}$ and $Z'|_F=Z|_F$.

The hypothesis also gives $D\subseteq\underline Z$. Since $Z$ and
$Z'$ are positive circuits, they agree on $F$ and are positive on
every element of $D$, so $Z'=Z$. Hence $d_{G(P)}(X,Z)\leq m<\ell$,
a contradiction.
\end{proof}

The following is an extension of the proof of~\cite{KK92} to oriented matroids.

\begin{lemma}\label{lem:ub}
Let $\mathcal{M}=(E,\mathcal{L})$ be an oriented matroid of rank
$r\geq2$, let $|E|=n\leq2^{r-1}$ and $e\in E$, and let
$P_{\mathcal{M},e}$ be an oriented matroid polyhedron. For any two
vertices $X,X'$ of $G(P_{\mathcal{M},e})$ we have
$d_{G(P_{\mathcal{M},e})}(X,X')
\leq(n-1)^{\log_2(r-1)+1}$.
\end{lemma}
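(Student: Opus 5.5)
We adapt the Kalai--Kleitman argument, using \cref{lem:restriction-distance} in place of the geometric fact that deleting far-away facets does not shorten distances. For a vertex $X$ of $P=P_{\mathcal{M},e}$ and $k\geq0$, let $F_k(X)\subseteq E\setminus\{e\}$ be the set of elements $f$ such that some vertex $Z$ with $d_{G(P)}(X,Z)\leq k$ has $Z(f)=0$. Geometrically these are the facets reachable within $k$ steps. We prove by induction on $n$ and $r$ an estimate of the form $d\leq f(n,r)$, with $f(n,r)=(n-1)^{\log_2(r-1)+1}$ or a recursive bound that implies it. The key dichotomy is this. Let $k_X$ be the least $k$ with $|F_k(X)|>(n-1)/2$, and define $k_{X'}$ in the same way. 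Then $F_{k_X}(X)\cap F_{k_{X'}}(X')\neq\emptyset$, so there is a common element $f$. Some vertex $Z$ near $X$ and some vertex $Z'$ near $X'$ both have a zero in coordinate $f$. Both therefore lie in the face $P\cap\{f=0\}$, which is the oriented matroid polyhedron of the contraction $\mathcal{M}/f$. This has rank $r-1$ and at most $n-1$ elements. Its graph is a subgraph of $G(P)$, since faces of the big face lattice give induced skeleta. Hence
\[
d(X,X')\leq k_X+k_{X'}+2+f(n-1,r-1).
\]

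Next we bound $k_X$ using \cref{lem:restriction-distance}. Put $k=k_X-1$ and $F=\{e\}\cup F_k(X)$, so $|F|\leq (n-1)/2+1$. Every vertex $Z$ with $d(X,Z)\leq k$ has $Z(f)\neq0$ for all $f\notin F$. By the lemma, such a $Z$ restricts to a vertex of $P_{\mathcal{M}|F,e}$, and its distance from $X|_F$ is at least $d(X,Z)$. Now choose $Z$ at distance exactly $k$; it exists because $G(P)$ is connected and $F_k$ changes with $k$, so we may assume $k$ is attained. Applying the induction hypothesis to $\mathcal{M}|F$ gives $k\leq f(\lfloor (n-1)/2\rfloor+1,r)$, after handling the rank of $\mathcal{M}|F$, which is at most $r$. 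Combining the two estimates yields the Kalai--Kleitman recursion
\[
f(n,r)\leq 2f(\lceil n/2\rceil,r)+f(n-1,r-1)+2.
\]

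The remaining step is arithmetic. Following \cite{KK92}, one shows that this recursion, together with base cases, gives $f(n,r)\leq (n-1)^{\log_2(r-1)+1}$. The base cases are rank $2$, where the graph is a path and so has diameter at most $n-1$, and small $n$. The hypothesis $n\leq2^{r-1}$ ensures that we never fall below the base cases inappropriately. For example, it keeps $\log_2$-depth halving consistent, and the rank of $\mathcal{M}|F$ can be assumed at least $2$, or else the distance is trivially small.

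We expect the main obstacle to be the bookkeeping in the restriction step. Three issues arise there. First, the restricted matroid $\mathcal{M}|F$ may have smaller rank, or may fail the side condition $|F|\leq 2^{\operatorname{rk}-1}$. Second, the face $P\cap\{f=0\}$ may be empty of the relevant vertices. Third, the chosen $Z$ must genuinely realize distance $k$. We would try first to state the induction hypothesis as monotone in both $n$ and $r$, which absorbs the rank drop, and to verify the recursion numerically with the precise constants and floors. We would fall back on the cruder KK bound $n^{\log_2 r+2}$ if the stated exponent does not close.
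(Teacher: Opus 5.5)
Your outline is the paper's own argument. It uses Kalai--Kleitman with \cref{lem:restriction-distance} in place of the geometric facet-deletion step, takes a common element $g$ whose zero set is the face $P_{\mathcal{M}/g,e}$, and measures everything by a monotone quantity $H(N,r)$ over polyhedra of rank \emph{at most} $r$, which absorbs rank drops under restriction. However, two steps fail as written.

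\textbf{Loops are never deleted.} A loop $g\neq e$ vanishes on every vertex, so it lies in every $F_k(X)$. The common element of $F_{k_X}(X)$ and $F_{k_{X'}}(X')$ may therefore be a loop. Then $P\cap\{g=0\}=P$ and $\mathcal{M}/g=\mathcal{M}\setminus g$ still has rank $r$, so your term $f(n-1,r-1)$ is unjustified. The paper deletes all loops other than $e$ at the outset. This changes neither $G(P)$ nor the hypothesis $n\le 2^{r-1}$, and afterwards $\operatorname{rk}(\mathcal{M}/g)=r-1$.

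\textbf{Your $k_X$ need not exist.} An element that is nonzero at every vertex of $P$ (a redundant constraint) never enters any $F_k(X)$. If at least half the elements are of this kind, then $|F_k(X)|\le (n-1)/2$ for all $k$ and there is no common element to contract. The paper caps the radius: $k_Y$ is the largest $k\le \ell=d_{G(P)}(X,X')$ with $|E_k(Y)|\le\lfloor N/2\rfloor$. If $k_X=\ell$ or $k_{X'}=\ell$, then \cref{lem:restriction-distance} alone (taking $W$ at distance exactly $\ell$, e.g.\ the other endpoint) gives $\ell\le H(\lfloor N/2\rfloor,r)$. Only when $k_X,k_{X'}<\ell$ does one pick $g\in E_{k_X+1}(X)\cap E_{k_{X'}+1}(X')$. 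The cap also settles your concern about a vertex at exact distance $k$, since one lies on a shortest $X$--$X'$ path.

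Your displayed estimate also mixes two conventions. With your threshold definition the correct estimate is $d\le k_X+k_{X'}+H(N-1,r-1)$ together with $k_X\le H(\lfloor N/2\rfloor,r)+1$. The extra $+2$ in your display belongs to the paper's convention and would be counted twice.

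Finally, your fallback bound $n^{\log_2 r+2}$ would not prove the lemma as stated. The paper proves $H(N,r)\le 2H(\lfloor N/2\rfloor,r)+H(N,r-1)+2$ for all $N,r$. Only at the end does it invoke the calculation in \cite[Theorem~2.6]{KS09bis} for $N<2^{r-1}$, so the side condition never has to be carried through the induction.
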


\begin{proof}
We may delete loops different from $e$, since this does not change
$G(P_{\mathcal{M},e})$. Moreover, $e$ is not a loop because
$P_{\mathcal{M},e}$ has vertices. Put $P=P_{\mathcal{M},e}$ and
$N=|E\setminus\{e\}|\leq n-1$, and denote by $H(N,r)$ the maximum
diameter of an oriented matroid polyhedron of rank at most $r$ with
at most $N$ elements different from its distinguished element.

Fix vertices $X,X'$ of $P$ and put $\ell=d_{G(P)}(X,X')$. For
$Y\in\{X,X'\}$ and $0\leq j\leq \ell$, let $E_j(Y)$ be the set of
$f\in E\setminus\{e\}$ for which there is a vertex $Z$ of $P$ with
$d_{G(P)}(Y,Z)\leq j$ and $Z(f)=0$. If $|E_0(Y)|>N/2$, set
$k_Y=0$; otherwise let $k_Y$ be maximal such that
$0\leq k_Y\leq \ell$ and
$|E_{k_Y}(Y)|\leq\lfloor N/2\rfloor$.

We claim that $k_Y\leq H(\lfloor N/2\rfloor,r)$. This is immediate
if $k_Y=0$. Otherwise choose a vertex $W$ with
$d_{G(P)}(Y,W)=k_Y$ and put $F_Y=E_{k_Y}(Y)\cup\{e\}$. By the
definition of $E_{k_Y}(Y)$, every vertex in the $k_Y$-ball around
$Y$ has no zero coordinate outside $F_Y$. Hence
\cref{lem:restriction-distance} gives
\[
k_Y=d_{G(P)}(Y,W)
\leq d_{G(P_{\mathcal{M}|F_Y,e})}(Y|_{F_Y},W|_{F_Y})
\leq H(\lfloor N/2\rfloor,r),
\]
proving the claim.

If $k_X=\ell$ or $k_{X'}=\ell$, the claim already gives
$\ell\leq H(\lfloor N/2\rfloor,r)$. We may therefore assume that
$k_X,k_{X'}<\ell$. By maximality, both $E_{k_X+1}(X)$ and
$E_{k_{X'}+1}(X')$ have more than $N/2$ elements, and hence they
contain a common element $g$. Thus there are vertices
$\widetilde X,\widetilde X'$ of $P$ with
$\widetilde X(g)=\widetilde X'(g)=0$,
$d_{G(P)}(X,\widetilde X)\leq k_X+1$ and
$d_{G(P)}(\widetilde X',X')\leq k_{X'}+1$.

The vertices with $g$-coordinate zero form a face of
$P_{\mathcal M,e}$ naturally identified with
$P_{\mathcal M/g,e}$. In particular, its graph is the corresponding
face subgraph of $G(P)$. Since $g$ is not a loop,
$\operatorname{rk}(\mathcal{M}/g)=r-1$, and this polyhedron has at
most $N-1$ elements different from $e$. Consequently,
\[
\ell\leq k_X+k_{X'}+2+H(N-1,r-1),
\]
and therefore
\[
H(N,r)\leq
2H(\lfloor N/2\rfloor,r)+H(N,r-1)+2.
\]
With $d=r-1$, this is the standard Kalai--Kleitman recurrence. Since
$N\leq n-1<2^{r-1}$, the calculation in
\cite[Theorem~2.6]{KS09bis} yields
$H(N,r)\leq N^{\log_2(r-1)+1}
\leq(n-1)^{\log_2(r-1)+1}$.
\end{proof}

\ub*
\begin{proof}
First note that we may assume without loss of generality that $E=B\cup B'\cup\{e\}$, since other elements can be removed from $\mathcal{M}$ and the $e$-distance can only increase. In particular, $n\coloneqq|E|\le 2r+1$.

For $r\le 2$, there are at most $\binom{2r}{r}$ bases avoiding $e$, and Theorem~\ref{thm:main} implies that the graph of $e$-embracing bases is connected. Hence
$
d_e(B,B')\le \binom{2r}{r}-1\le 2r^{\log_2(r)+3},$
 and we may assume that $r\ge 3$.

By adding loops if necessary, we may furthermore assume that $n=2r+1$. This does not change the bases, the fundamental circuits $C(B,e)$ and $C(B',e)$, or the $e$-embracing exchange graph. Let $X=C(B,e)$ and $X'=C(B',e)$. Then $r^*=\operatorname{rk}(\mathcal{M}^*)=n-r=r+1$, and, since $r\ge 3$, $n=2r+1\le 2^r=2^{r^*-1}$. 
If $X=X'$, then \cref{thm:main} gives
$d_e(B,B')=|B\setminus B'|\le r$, and we are done. Hence we may
assume that $X\neq X'$.
Thus, by Lemma~\ref{lem:ub}, $d_{G(P_{\mathcal{M}^*,e})}(X,X')\le (n-1)^{\log_2(r^*-1)+1}$. 
Since $n=2r+1$ and $r^*=r+1$, we get $d_{G(P_{\mathcal{M}^*,e})}(X,X')\le (2r)^{\log_2(r)+1}$. 
Combining this with Theorem~\ref{thm:main} we get $d_e(B,B')\le r(2r)^{\log_2(r)+1}=2r^{\log_2(r)+3}$, 
concluding the proof.
\end{proof}

\section{Graphic oriented matroids}
\label{sec:graphic}

We next consider the graphic case. Let $D=(V,E)$ be a directed graph, and let $T$ be a spanning tree of its underlying undirected graph. For $u,v\in V$, we denote by $T[u,v]$ the unique path in $T$ from $u$ to $v$, traversed from $u$ to $v$, and use the same notation for its arc set. Thus, the arcs of $T[u,v]$ need not all be oriented in the direction of the traversal. We call $T$ \emph{$uv$-embracing} if $T[u,v]$ is a directed path from $u$ to $v$.

The directed graph $D$ induces an oriented graphic matroid on $E$. A choice of traversal of a cycle determines a signed circuit by assigning sign $+$ to the arcs whose orientation agrees with the traversal and sign $-$ to the remaining arcs. Let $e=ts\in E$, and let $T$ be a spanning tree with $e\notin T$. Under the convention from the introduction that $C(T,e)_e=+$, the tree $T$ is $e$-embracing if and only if it is $st$-embracing. Indeed, the corresponding traversal of the fundamental cycle first uses $e$ from $t$ to $s$ and then follows $T[s,t]$ from $s$ to $t$.

The oriented constraint may force an exchange sequence to be longer than an ordinary shortest basis-exchange sequence, even in the graphic case.

\begin{example}\label{ex:graphic-not-shortest}
Let $D=(V,E)$ be the directed graph with
\begin{equation*}
V=\{s,t,u,v,w\}
\qquad\text{and}\qquad
E=\{su,sv,uw,vw,ut,vt\}.
\end{equation*}
Let
\begin{equation*}
A=\{su,uw,vw,ut\}
\qquad\text{and}\qquad
B=\{sv,uw,vw,vt\}.
\end{equation*}
Both $A$ and $B$ are $st$-embracing spanning trees. Since $|A\setminus B|=2$, an exchange sequence of length two would have to replace, at each step, an arc of $A\setminus B$ by an arc of $B\setminus A$. The first deleted arc would therefore be either $su$ or $ut$. However, neither arc can be replaced by $sv$ or $vt$ so as to obtain an $st$-embracing spanning tree. Hence, no $st$-embracing exchange sequence of length two transforms $A$ into $B$.

On the other hand, the sequence
\begin{equation*}
A,\quad A-vw+sv,\quad A-vw+sv-ut+vt,\quad B
\end{equation*}
is an $st$-embracing exchange sequence of length three, where the last exchange replaces $su$ by $vw$.
\end{example}

We call an $st$-embracing exchange sequence $T_0,T_1,\ldots,T_q=B$ \emph{monotone with respect to $B$} if $|T_i\cap B|\geq |T_{i-1}\cap B|$ for every $i\in[q]$. The following result proves that the rank bound holds for graphic oriented matroids. In fact, the exchange sequence can be chosen monotone with respect to the target tree.

\begin{theorem}
\label{thm:graphic-monotone}
Let $D=(V,E)$ be a directed graph, let $s,t\in V$ be distinct vertices, and let $A$ and $B$ be $st$-embracing spanning trees of $D$. Then there exist $st$-embracing spanning trees
\begin{equation*}
A=T_0,T_1,\ldots,T_q=B
\end{equation*}
with $q\leq |V|-1$ such that $|T_{i-1}\mathbin{\Delta}T_i|=2$ and $|T_i\cap B|\geq |T_{i-1}\cap B|$ for every $i\in[q]$.
\end{theorem}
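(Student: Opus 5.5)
The plan is to argue by induction on $|V|$, contracting one arc of $B[s,t]$ at a time. Each contraction should cost at most one exchange step, and that step should strictly increase the intersection with $B$. If $|V|=2$, both trees consist of a single arc $s\to t$. The only embracing trees are then single $s\to t$ arcs, so at most one exchange suffices and it increases $|T\cap B|$. For the inductive step, write $Q=B[s,t]=(s=q_0,q_1,\ldots,q_m=t)$, which is a directed path, and let $a=sq_1$ be its first arc.

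The first ingredient is a \emph{contraction/lifting lemma}. Let $T$ be a spanning tree with $a=sx\in T$, and merge $s$ and $x$ into a new vertex $s'$ of $D/a$. Then $T$ is $st$-embracing in $D$ if and only if $T/a$ is $s't$-embracing in $D/a$. For the forward direction, $T[x,t]$ meets the directed path $T[s,t]$ at some vertex, and the tree path from $s'$ to $t$ in $T/a$ is the tail of $T[s,t]$ from that vertex, hence directed. For the converse, the directed path from $s'$ to $t$ in $T/a$ lifts to a path starting at $s$ or at $x$; in the latter case we prepend the forward arc $a=sx$. Moreover $B/a$ is $s't$-embracing, exchange steps in $D/a$ between trees containing $a$ are exactly exchange steps in $D$, and $|T\cap B|=|T/a\cap B/a|+1$ for every tree $T\ni a$, so monotonicity is preserved. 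Hence it suffices to move from $A$ to some $st$-embracing $A'\ni a$ in at most one step with $|A'\cap B|\ge|A\cap B|$. The induction hypothesis then supplies at most $|V|-2$ further steps, for a total of at most $|V|-1$.

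To produce $A'$, we argue as follows. If $a\in A$, take $A'=A$. Otherwise, add $a$ to $A$; this closes the cycle $a+A[s,x]$. Since $B$ is a tree containing $a$, not all arcs of $A[s,x]$ lie in $B$. If $x$ lies on $A[s,t]$, then removing any arc $f\in A[s,x]\setminus B$ gives $A'=A+a-f$, whose $s$--$t$ path is $a$ followed by $A[x,t]$, so $A'$ is embracing. Also $|A'\cap B|=|A\cap B|+1$. If $x$ lies off $A[s,t]$, let $y$ be the vertex where $A[x,s]$ first meets $A[s,t]$. If some arc of $A[x,y]$ is not in $B$, we remove it; the path $A[s,t]$ is untouched, so we are again done with a strict increase.

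The remaining case is the main obstacle. Here $x\notin A[s,t]$ and all of $A[x,y]$ lies in $B$, so the only removable arcs lie on $A[s,y]$. Removing one of them is allowed only if $A[x,y]$ is directed from $x$ to $y$. The first thing to try is to exploit that $A[x,y]\subseteq B$ together with $a$ gives a path from $s$ to $y$ inside $B$. If $y$ lies on $Q$, this path must coincide with $Q[s,y]$ and is therefore directed, which makes the swap valid. If $y$ is not on $Q$, we instead choose a different arc of $Q$ to contract. The candidates are the first arc $q_iq_{i+1}$ of $Q$ that is not in $A$, or symmetrically the last arc $q_{m-1}t$, using a mirrored contraction lemma at $t$. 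For an interior arc the lifting lemma needs the extra check that the lifted path traverses the merged vertex in the forward direction, which holds when $Q[s,q_i]\subseteq A[s,t]$. I expect the case analysis showing that at least one of these choices always admits a single monotone embracing swap to be the crux of the proof.
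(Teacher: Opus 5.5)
You correctly prove the contraction lemma and correctly reduce to a single step $A\to A'\ni a$ with $|A'\cap B|\ge|A\cap B|$. The gap is that you then stall on a case that is not an obstacle, and the workaround you propose cannot work.

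In the ``remaining case'' you treat the arcs of $A[x,y]$ as unremovable because they lie in $B$. That silently demands a \emph{strict} increase of $|T\cap B|$, as your first paragraph announces. Strictness is impossible in general. If every step strictly increased $|T\cap B|$, the sequence would have length exactly $|A\setminus B|$. But in Example~\ref{ex:graphic-not-shortest}, $|A\setminus B|=2$ and no embracing sequence of length $2$ exists. Concretely, there $a=sv$, $x=v$, $y=u$, and $A[v,u]=\{vw,uw\}\subseteq B$ is not directed. The mirrored choice $vt$ fails symmetrically, and $Q$ has no interior arcs. So no choice of arc admits a single strictly increasing embracing swap, and the case analysis you expect to be the crux has no solution.

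The missing observation is that your own reduction allows $|A'\cap B|=|A\cap B|$, so you may delete any arc $f$ of $A[x,y]$. This arc lies on the cycle $a+A[x,s]$. It is not on $A[s,t]$, because its endpoint nearer to $x$ precedes $y$. Hence $A+a-f$ is a spanning tree containing both $a$ and the directed path $A[s,t]$. Your bound $|V|-2$ for $D/a$ does not depend on $|A'\cap B|$, so Case~2 needs no subcases at all.

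A second, smaller hole concerns the case $x=t$, i.e.\ when $Q$ is the single arc $st$. Contracting $a$ then merges the two terminals, so the induction hypothesis (stated for distinct $s,t$) does not apply, and your base case $|V|=2$ does not cover it. Treat it directly. Every spanning tree containing $a=st$ is $st$-embracing. So ordinary basis exchange, always deleting an arc of $T\setminus B$ (never $a$), reaches $B$ in $|B\setminus A'|\le|V|-2$ monotone steps. Also note that $D/a$ may acquire parallel arcs and loops, so you should induct over directed multigraphs.

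With these repairs your proof is essentially the paper's, in contracted form. The paper inserts the arcs $f_1,\dots,f_k$ of $B[s,t]$ in order. At each step it deletes the last arc $g$ of $T[v_{i-1},v_i]$ outside $B[v_i,t]$; this arc may itself lie in $B$, exactly as in the repair. The paper then checks by hand that $g\notin B[s,v_{i-1}]$ and that the new $s$--$t$ path is directed. Contraction gives you the preserved prefix for free and reduces every step to inserting an arc at the source. The paper's second phase of plain basis exchange is your degenerate case, and its count $|B[s,t]|+|B\setminus A'|\le|V|-1$ is your $1+(|V|-2)$ unrolled.
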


\begin{proof}
Write the directed path in $B$ from $s$ to $t$ as
\begin{equation*}
B[s,t]=(f_1,\ldots,f_k),
\end{equation*}
where $f_i=v_{i-1}v_i$ for $i\in[k]$, with $v_0=s$ and $v_k=t$. We first transform $A$ into an $st$-embracing spanning tree containing $B[s,t]$ by inserting the arcs $f_1,\ldots,f_k$ in this order. The following claim gives one step of this procedure.

\begin{claim}
\label{claim:graphic-step}
Let $T$ be an $st$-embracing spanning tree such that $B[s,v_{i-1}]\subseteq T$, and suppose that $f_i\notin T$. Then there exists $g\in T$ such that $T'=T-g+f_i$ is an $st$-embracing spanning tree and $B[s,v_i]\subseteq T'[s,v_i]$.
\end{claim}

\begin{claimproof}
Adding $f_i$ to $T$ creates the cycle $T[v_{i-1},v_i]+f_i$. Traverse $T[v_{i-1},v_i]$ from $v_{i-1}$ to $v_i$, and let $g$ be the last arc of this path that does not belong to $B[v_i,t]$. Such an arc exists because the first arc of $T[v_{i-1},v_i]$ is incident with $v_{i-1}$, whereas $v_{i-1}$ does not lie on $B[v_i,t]$.

Every arc following $g$ on $T[v_{i-1},v_i]$ belongs to $B[v_i,t]$. Consequently, these arcs form $B[v_i,v_j]$, traversed from $v_j$ to $v_i$, for some $j\in[i,k]$. In particular, $g$ has endpoint $v_j$. Moreover, $g\notin B[s,v_{i-1}]$, since every arc of $B[s,v_{i-1}]$ has both endpoints among $v_0,\ldots,v_{i-1}$. Hence
\begin{equation*}
T'=T-g+f_i
\end{equation*}
is a spanning tree containing $B[s,v_j]$.

If $g\notin T[s,t]$, then the directed path $T[s,t]$ remains in $T'$, and hence $T'$ is $st$-embracing. Suppose that $g\in T[s,t]$. Since $B[s,v_{i-1}]$ remains in $T-g$, the vertices $s$ and $v_{i-1}$ lie in the same component of $T-g$, whereas $v_j$ and $v_i$ lie in the other component. As $g$ also belongs to the directed path $T[s,t]$, it is traversed toward $v_j$ and therefore enters $v_j$. Consequently, the concatenation of the directed paths $B[s,v_j]$ and $T[v_j,t]$ is a directed path from $s$ to $t$ in $T'$. Thus, $T'$ is $st$-embracing.

Finally, $B[s,v_i]\subseteq T'$. Since $T'$ is a tree, this implies $T'[s,v_i]=B[s,v_i]$.
\end{claimproof}

Repeatedly applying \cref{claim:graphic-step} yields, after at most $|B[s,t]|$ exchanges, an $st$-embracing spanning tree $A'$ with $B[s,t]\subseteq A'$. Since $A'$ is a tree, this implies $A'[s,t]=B[s,t]$.

We now transform $A'$ into $B$ without changing the directed path $B[s,t]$. Starting with $T=A'$, while $T\neq B$, choose any $g\in T\setminus B$. By basis exchange in the underlying graphic matroid, there exists $f\in B\setminus T$ such that $T-g+f$ is a spanning tree. Since $B[s,t]\subseteq T$ and $g\notin B$, the deleted arc does not belong to $B[s,t]$. Therefore, $T-g+f$ still contains the directed path $B[s,t]$ and is $st$-embracing. Each such exchange increases $|T\cap B|$ by one, so after exactly $|B\setminus A'|$ exchanges we reach $B$.

Every exchange in the first phase inserts an arc of $B$, so the size of the intersection with $B$ never decreases. The total number of exchanges is at most
\begin{equation*}
|B[s,t]|+|B\setminus A'|\leq |B|=|V|-1,
\end{equation*}
which proves the theorem.
\end{proof}

To deduce Theorem~\ref{thm:graphic}, let $D=(V,E)$ be a directed graph representing the given graphic oriented matroid. By identifying one vertex from each connected component of $D$, we may assume that $D$ is connected without changing its oriented graphic matroid. Hence, the bases are precisely the spanning trees of the underlying graph.

If $e$ is a loop, every basis is $e$-embracing and the result follows from ordinary basis exchange. Otherwise write $e=ts$ with $s\neq t$. The $e$-embracing bases of the oriented graphic matroid are precisely the $st$-embracing spanning trees. Since $D$ is connected, its graphic matroid has rank $|V|-1$. Hence, Theorem~3.1 gives $d_e(A,B)\leq r$ in the graphic case. If $A\cap B=\emptyset$, then every exchange sequence transforming $A$ into $B$ has length at least $|A|=|V|-1$. Thus, in this case, Theorem~\ref{thm:graphic-monotone} yields a shortest possible $st$-embracing exchange sequence.


\graphic*

\section{Lawrence oriented matroids}

{The aim of this section is to prove that the $e$-embracing distance is at most $r$ for Lawrence oriented matroids (LOMs). 
The proofs of the preliminary results rely on the fact that $[n]$ is a totally ordered set with its natural increasing order. Nevertheless, considering the reverse order yields analogous statements that will also be used in the proof of \Cref{LOMsdiamr} (specifically, assertions (b) of each lemma). Since these proofs follow from the same arguments, we omit the details.} For the whole section, if $X$ is a circuit, we write
$\underline{X}=\{x_1<\cdots<x_{r+1}\}\subseteq[n]$, and similarly
for $Y$.

For our purposes, the following definition of LOMs is more convenient than the one in \cite{lawrence1981unions} (see also \cite[Section 7.6]{BLSWZ99} and \cite{R01,RS88}). 
Given a matrix $A{=(a_{i,j})}\in\{+1,-1\}^{r\times n}$ the \emph{Lawrence oriented matroid} (LOM) $\M_A$ is uniform of rank $r$ and the signs of the elements of any circuit $X$ of $\M_A$, can be obtained by the equation $X_{x_{i}}=X_{x_{i+1}} \Longleftrightarrow a_{i,x_i}=-a_{i,x_{i+1}}$, see \cite[Lemma 2.1]{R01}.  
To simplify the notation, we denote $\mathcal{C}^+:=\mathcal{C}^+(A)$, $\mathcal{C}_e^+:=\mathcal{C}_e^+(A)$ and $G(P_{\mathcal{M}_A^*,e}):=G_e(A)$. {We begin with the following remark:}


\begin{observation}\label{positivecircuit}
Let $\M_A$ be a LOM of rank $r$. A set
$\{x_1<\cdots<x_{r+1}\}\subseteq[n]$ is the support of a positive
circuit if and only if
$a_{i,x_i}=-a_{i,x_{i+1}}$ for every $i=1,\ldots,r$.
\end{observation}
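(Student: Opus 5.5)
This follows directly from the sign rule for circuits of $\M_A$ quoted above from \cite[Lemma 2.1]{R01}. Let $X$ be a circuit with support $\underline{X}=\{x_1<\cdots<x_{r+1}\}$. Because $\M_A$ is uniform of rank $r$, every $(r+1)$-subset of $[n]$ is the support of exactly one pair of opposite circuits $\pm X$. The rule $X_{x_i}=X_{x_{i+1}} \Longleftrightarrow a_{i,x_i}=-a_{i,x_{i+1}}$ therefore determines the sign of each consecutive pair. Fixing $X_{x_1}$, it determines all of $X$ by induction along $i=1,\ldots,r$.

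For the direction from the matrix condition to positivity, assume $a_{i,x_i}=-a_{i,x_{i+1}}$ for all $i\in[r]$. Take the circuit $X$ on this support with $X_{x_1}=+$. The rule gives $X_{x_{i+1}}=X_{x_i}$ for every $i$, so induction yields $X_{x_j}=+$ for all $j$. Hence $X$ is positive.

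For the converse, suppose the support carries a positive circuit $X$. Then $X_{x_i}=X_{x_{i+1}}=+$ for every $i$, and the "only if" part of the equivalence gives $a_{i,x_i}=-a_{i,x_{i+1}}$.

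No step here is a real obstacle. The only point to check is that the cited equivalence holds for all consecutive pairs in every circuit of a uniform LOM; this is the content of \cite[Lemma 2.1]{R01}, and we take it as given. Uniformity is what guarantees that each $(r+1)$-set really is a circuit support, so no extra independence check is needed.
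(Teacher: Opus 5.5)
Your proposal is correct and follows the paper's approach. The paper states this observation without proof, as an immediate consequence of the quoted sign rule from \cite[Lemma 2.1]{R01} applied to each consecutive pair $x_i,x_{i+1}$. Uniformity ensures that every $(r+1)$-subset supports a circuit, so your explicit treatment of both directions just spells out what the paper leaves implicit.
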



Given $X\in \mathcal{C}^+(A)$ and $1\le y\le x_j$, $1\le j\le r$,
we define the integer $x_j(y)\in\{x_j,x_{j+1}\}$ such that
$a_{j,y}=-a_{j,x_j(y)}$. Note that $x_j(y)$ exists since
$a_{j,x_j}=-a_{j,x_{j+1}}$. For $s>j$, define
$x_j^s(y):=x_s(x_j^{s-1}(y))$ and $x_j^j(y):=x_j(y)$. 
As an example,  \Cref{thmLOM} shows a matrix $A$ and $X\in \mathcal{C}^+(A)$ where
$\underline{X}=\{4,5,6,7,9,11,13,14\}$. Thus, if  $y=1$  we obtain $x_1(y)=4$, $x_1^2(y)=6$, $x_1^3(y)=7$, $x_1^4(y)=9$, $x_1^5(y)=11$, $x_1^6(y)=13$ and $x_1^7(y)=14$.
The following lemma will be useful in this section.

\begin{lemma}\label{intercambio}
Let $\M_A$ be a LOM of rank $r$, let $X\in\mathcal{C}^+(A)$, and
let $y\in[n]\setminus\underline{X}$. Then there exist
$Z\in\mathcal{C}^+(A)$ and $x\in\underline{X}$ such that
$\underline{Z}=(\underline{X}\cup\{y\})\setminus\{x\}$, satisfying the following conditions.
\begin{enumerate}[label=(\alph*)]
  \item  If $x_i<y< x_{i+1}$ and $a_{i,x_i}=- a_{i,y}$, $1\le i\le r$, then  $x\in \{x_{i+1},\ldots,x_{r+1}\}$. If $1\le y<x_{1}$ then    
 $x\in \underline{X}$. \label{it:a}
  \item If $x_i<y< x_{i+1}$ and $a_{i,y}=-a_{i,x_{i+1}}$, $1\le i\le r$, then $x\in \{x_{1},\ldots,x_{i}\}$. 
  If $n\ge y>x_{r+1}$, then $x\in\underline{X}$. \label{it:b}
\end{enumerate}
\end{lemma}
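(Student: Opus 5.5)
The proof is a direct construction: use \cref{positivecircuit} to reduce everything to sign conditions on consecutive elements of the sorted support, and build $\underline{Z}$ greedily with the chain $x_j^s(y)$ defined just before the lemma. Part (b) is part (a) for the reversed order on $[n]$, as announced at the start of the section, so I would only treat (a).

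\emph{Setup for (a).} Suppose $x_i<y<x_{i+1}$ and $a_{i,x_i}=-a_{i,y}$; put $i=0$ when $y<x_1$, in which case there is no condition. I build $\underline{Z}=\{z_1<\dots<z_{r+1}\}$ position by position, setting $z_j=x_j$ for $j\le i$ and $z_{i+1}=y$. For $i\ge 1$ the condition $a_{i,z_i}=-a_{i,z_{i+1}}$ is exactly the hypothesis $a_{i,x_i}=-a_{i,y}$. For $j<i$ the condition is inherited from $X$ being positive.

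\emph{The greedy chain.} Set $c_i=y$, and for $s>i$ set $c_s=x_s(c_{s-1})=x_{i+1}^{s}(y)$, where $x_{i+1}^{i+1}(y)=x_{i+1}(y)$.
\begin{itemize}
\item \emph{Well-definedness.} I check inductively that $c_{s-1}\le x_s$, which is what the definition of $x_s(\cdot)$ requires. Initially $c_i=y<x_{i+1}$. As long as the chain continues, $c_s=x_s$, which is at most $x_{s+1}$.
\item \emph{Stopping rule.} Let $s^*$ be the first index $s\in\{i+1,\dots,r\}$ with $c_s=x_{s+1}$, if one exists. Then I remove $x=x_{s^*}$.
\item \emph{Resulting support.} With this choice, $z_{s+1}=c_s=x_s$ for $i<s<s^*$, and $z_{s^*+1}=x_{s^*+1}$. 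For $j>s^*+1$ I have $z_j=x_j$.
\item \emph{If the chain never stops.} Then $c_s=x_s$ for all $s\le r$, so $z_{s+1}=x_s$, and I remove $x=x_{r+1}$.
\end{itemize}

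\emph{Verifying positivity.} By \cref{positivecircuit} I need $a_{j,z_j}=-a_{j,z_{j+1}}$ for every $j$.
\begin{itemize}
\item For $j\le i$ this was checked in the setup.
\item For $i<j\le s^*$, or $i<j\le r$ if the chain never stops, I have $z_j=c_{j-1}$ and $z_{j+1}=c_j=x_j(c_{j-1})$. The condition is then precisely the defining property $a_{j,c_{j-1}}=-a_{j,x_j(c_{j-1})}$.
\item For $j>s^*$ both $z_j$ and $z_{j+1}$ are original elements $x_j,x_{j+1}$, so the condition is inherited from $X$.
\end{itemize}
Hence $\underline{Z}=(\underline{X}\cup\{y\})\setminus\{x\}$ is the support of a positive circuit, and $Z$ is that circuit. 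The removed element is $x_{s^*}$ or $x_{r+1}$, which lies in $\{x_{i+1},\dots,x_{r+1}\}$. When $y<x_1$ we only need $x\in\underline{X}$, which holds trivially.

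\emph{Expected obstacle.} The only delicate point is the index bookkeeping. I must make sure the chain never needs $x_s(c)$ with $c>x_s$, and that the sorted order of $\underline{Z}$ matches the positions assigned. Both follow from $c_{s-1}\in\{y,x_{s-1}\}$ and $y<x_{i+1}$.
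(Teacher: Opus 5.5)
Your proposal is correct and is essentially the paper's proof. The paper takes $\underline{Z}=\{x_1,\ldots,x_i,y,x_{i+1}^{i+1}(y),\ldots,x_{i+1}^{r}(y)\}$ (or $\{y,x_1^1(y),\ldots,x_1^r(y)\}$ when $y<x_1$), checks positivity via \cref{positivecircuit}, and gets (b) by reversing the order. Your stopping rule only makes explicit which element is dropped: once $c_s=x_{s+1}$, positivity of $X$ forces $c_{s'}=x_{s'+1}$ for all $s'\ge s$, so your $Z$ coincides with the paper's.
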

\begin{proof}
\ref{it:a} If $i=r$, then
$\underline{Z}=\{x_1,\ldots,x_r,y\}$ satisfies the conditions of the
lemma by \cref{positivecircuit}. We may therefore assume that
$i\le r-1$.

We show that
\[
\underline{Z}
=\{x_1,\ldots,x_i,y,
x_{i+1}^{i+1}(y),x_{i+1}^{i+2}(y),\ldots,x_{i+1}^{r}(y)\}
\]
satisfies the conditions of the lemma. Indeed, by
\cref{positivecircuit},
$a_{j,x_j}=-a_{j,x_{j+1}}$ for every $1\le j\le i-1$, and
$a_{i,x_i}=-a_{i,y}$ by hypothesis. Furthermore,
$a_{i+1,y}=-a_{i+1,x_{i+1}^{i+1}(y)}$ and
\[
a_{s,x_{i+1}^{s-1}(y)}
=-a_{s,x_{i+1}^{s}(y)}
\]
for every $s\ge i+2$. Hence, \cref{positivecircuit} yields
$Z\in\mathcal{C}^+(A)$, and
$\underline{Z}=\underline{X}\cup y-x$ for some
$x\in\{x_{i+1},\ldots,x_{r+1}\}$.

Now suppose that $1\le y<x_1$, and consider
\[
\underline{Z}
=\{y,x_1^1(y),x_1^2(y),\ldots,x_1^r(y)\}.
\]
By definition,
$a_{1,y}=-a_{1,x_1^1(y)}$ and
$a_{i,x_1^{i-1}(y)}=-a_{i,x_1^i(y)}$ for every
$i=2,\ldots,r$. Thus $Z\in\mathcal{C}^+(A)$ by
\cref{positivecircuit}. Since
$\{x_1^1(y),\ldots,x_1^r(y)\}\subseteq\underline{X}$, we have
$\underline{Z}=\underline{X}\cup y-x$ for some
$x\in\underline{X}$, concluding the proof of \ref{it:a}.

Assertion \ref{it:b} follows analogously by reversing the order.
\end{proof}

Let $\underline{X}=\{x_1,\ldots,x_{8}\}=\{4,5,6,7,9,11,13,14\}$ in matrix $A$ of \Cref{thmLOM}. Then $X\in \mathcal{C}^+(A)$, and if $y=1$, it follows from the previous lemma that $Z\in \mathcal{C}^+(A)$, where  $\underline{Z}=\underline{X}\cup y -x_2=\{y,x_1(y),x_1^2(y),\ldots,x_1^7(y)\}=\{1,4,6,7,9,11,13,14\}$.

{We write $X\sim_{G_e(A)} Y$ to indicate that $X$ and $Y$ are adjacent in  $G_e(A)$.  We will frequently make use of 
 the fact that $X\sim_{G_e(A)} Y$   if  $|\underline{X}\Delta \underline{Y}|=2$ since $\M_A$ is a uniform oriented matroid (\cref{obs:uniformcircuit}).}

\begin{lemma}\label{e_primero_ultimo}
Let  $\M_A$ be a LOM of rank $r$   and let $X,Y\in \mathcal{C}_e^+(A)$ such that $e=x_{i}=y_{i}$, $1\le i\le r+1$. Then the following hold.
\begin{enumerate}[label=(\alph*)]
    \item If $j\coloneqq\min\{\ell\in[r+1]\mid x_\ell\neq y_\ell\}$ satisfies $i<j$, then $d_{G_e(A)}(X,Y)\le r+2-j$. \label{it:aa}
    \item If $j\coloneqq\max\{\ell\in[r+1]\mid x_\ell\neq y_\ell\}$ satisfies $j<i$, then $d_{G_e(A)}(X,Y)\le j$. \label{it:bb}
\end{enumerate}

\end{lemma}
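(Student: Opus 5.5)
We prove \ref{it:aa} by downward induction on $j$. Each step is a single adjacency in $G_e(A)$ that strictly increases the index of the first position where the two circuits differ, while never touching the common prefix $x_1=y_1,\ldots,x_{j-1}=y_{j-1}$, which contains $e=x_i=y_i$. Part \ref{it:bb} is the mirror statement under the reversed order of $[n]$, using assertion \ref{it:b} of \cref{intercambio}. It follows by the same argument, so we only describe \ref{it:aa}.

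Set up the induction on $m=r+2-j$. Allow also the degenerate case $X=Y$, where we set $j=r+2$ and the distance is $0$. The claim is symmetric in $X$ and $Y$, so we may assume $y_j<x_j$. Since $y_{j-1}=x_{j-1}<y_j<x_j$ (note $j\ge 2$ because $j>i\ge1$), the element $y:=y_j$ is not in $\underline{X}$ and lies strictly between $x_{j-1}$ and $x_j$. Because $Y$ is a positive circuit, \cref{positivecircuit} gives $a_{j-1,y_{j-1}}=-a_{j-1,y_j}$, that is, $a_{j-1,x_{j-1}}=-a_{j-1,y}$. This is exactly the hypothesis of \cref{intercambio}\ref{it:a} with index $j-1$. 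We therefore obtain $Z\in\mathcal{C}^+(A)$ with $\underline{Z}=(\underline{X}\cup\{y\})\setminus\{x\}$ for some $x\in\{x_j,\ldots,x_{r+1}\}$.

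Next we check the properties of $Z$. The removed element has position at least $j>i$, so $z_\ell=x_\ell$ for all $\ell\le j-1$. In particular $e=z_i$, hence $Z\in\mathcal{C}_e^+(A)$ and $e$ still sits at position $i$. Moreover $y$ is smaller than every remaining element $x_j,\ldots,x_{r+1}$ other than the removed one, so $z_j=y=y_j$. Since $|\underline{X}\Delta\underline{Z}|=2$ and $\M_A$ is uniform, \cref{obs:uniformcircuit} gives $X\sim_{G_e(A)}Z$. The pair $Z,Y$ now agrees on positions $1,\ldots,j$ with $e$ at position $i$. So either $Z=Y$, or its first differing index is $j'\ge j+1>i$. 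By induction $d_{G_e(A)}(Z,Y)\le r+2-j'\le r+1-j$, and therefore $d_{G_e(A)}(X,Y)\le r+2-j$.

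The only delicate points are the bookkeeping that the new element lands exactly at position $j$, and that the deleted element lies beyond the common prefix so that $e$ is preserved. Both are guaranteed by the position restriction $x\in\{x_j,\ldots,x_{r+1}\}$ in \cref{intercambio}\ref{it:a}. The case $y_j>x_j$ is handled by applying the same step to $Y$, moving it toward $X$, which is legitimate because graph distance is symmetric.
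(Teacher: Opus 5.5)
Your proof is correct and follows the paper's argument essentially step for step. It uses descending induction on the first differing index $j$: applying \cref{intercambio}\ref{it:a} at index $j-1$, justified by $a_{j-1,x_{j-1}}=-a_{j-1,y_j}$ from positivity of $Y$, yields a neighbor $Z$ of $X$ that agrees with $Y$ on the first $j$ positions, and (b) follows by reversing the order. The only cosmetic difference is that you anchor the induction at $X=Y$ (formally $j=r+2$) rather than at $j=r+1$, a case your inductive step then covers.
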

\begin{proof}
\ref{it:aa} We prove the assertion by descending induction on $j$. If
$j=r+1$, then $X$ and $Y$ differ in exactly one element, and hence
$d_{G_e(A)}(X,Y)\le1$ by \cref{obs:uniformcircuit}.

Suppose now that $j\le r$ and that the assertion holds whenever the
first differing index is greater than $j$. By interchanging $X$ and
$Y$ if necessary, suppose that $y_j<x_j$. By the minimality of $j$,
we have $x_{j-1}=y_{j-1}$. Since $Y\in\mathcal{C}_e^+(A)$,
\cref{positivecircuit} gives
$a_{j-1,x_{j-1}}=a_{j-1,y_{j-1}}=-a_{j-1,y_j}$.

Applying \cref{intercambio}\ref{it:a} to $X$ and
$x_{j-1}<y_j<x_j$, we obtain $Z\in\mathcal{C}_e^+(A)$ with
$\underline{Z}=\underline{X}\cup y_j-x$ for some
$x\in\{x_j,\ldots,x_{r+1}\}$. Hence
$X\sim_{G_e(A)}Z$ by \cref{obs:uniformcircuit}. Moreover, the first
$j$ elements of $Z$ and $Y$ coincide.

If $Z=Y$, the result follows. Otherwise let $j'>j$ be the first
index at which $Z$ and $Y$ differ. By induction,
$d_{G_e(A)}(Z,Y)\le r+2-j'\le r+1-j$. Therefore
$d_{G_e(A)}(X,Y)\le1+d_{G_e(A)}(Z,Y)\le r+2-j$.

Assertion \ref{it:bb} follows by the analogous argument with the reverse order.
\end{proof}


\begin{example}\label{example1}
Using the matrix $A$ with $r=7$ rows of \cref{thmLOM}, we aim to find  a path from $\underline{X}=\{1,2,3,e=5,\textcolor{red}{8},10,12,15\}$ to
$\underline{Y}=\{1,2,3,e=5,\textcolor{red}{6},11,13,14\}$ in $G_e(A)$ as in the proof of \cref{e_primero_ultimo}~\ref{it:aa}. In this example $j=5$ (marked in red) and  $|\underline{X}\Delta \underline{Y}|=2(r+2-j)=8$. As $6<8$, we find   $\underline{Z_1}=\{1,2,3,5,6,8,10,15\}$  where  $\underline{X}\sim_{G_e(A)}\underline{Z_1}$ and $|\underline{Z_1}\Delta \underline{Y}|=6$. Proceeding in the same way as we did for $X$ and $Y$, but this time for $Z_1$ and $Y$, we find   $\underline{Z_2}=\{1,2,3,5,6,8,13,14\}$ where  $\underline{Y}\sim_{G_e(A)}\underline{Z_2}$ and $|\underline{Z_2}\Delta \underline{Z_1}|=4$. Finally, for $Z_1$ and $Z_2$  we find   $\underline{Z_3}=\{1,2,3,5,6,8,10,14\}$. Hence,  $X,Z_1,Z_3,Z_2,Y$ is a path of length $4=r+2-j$ from $X$ to $Y$ in $G_e(A)$.
\end{example}

Given a matrix $A\in\{+1,-1\}^{r\times n}$ and  $e\in [n]$, denote by $A^{\le}_{i}$ the submatrix of $A$ containing the set of rows $\{1,\ldots,i\}$ and the set of columns $\{1,\ldots,e\}$. Similarly,  denote by $A^{\ge}_{i}$  the submatrix of $A$ containing the set of rows $\{i,\ldots,r\}$ and the set of columns $\{e,\ldots,n\}$. See \cref{thmLOM} for an illustration. 
 Next, we prove that $d_{G_e(A)}(X,Y)\le r$ for $X,Y\in \mathcal{C}_e^+(A)$ both containing $e$ in the same position. 

\begin{lemma}\label{emismaposicion}
Let  $\M_A$ be a LOM of rank $r$   and let $X,Y\in \mathcal{C}_e^+(A)$ such that $e=x_{i}=y_{i}$, $1\le i\le r+1$. Then $d_{G_e(A)}(X,Y)\le r.$
\end{lemma}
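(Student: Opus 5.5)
The plan is to split $X$ and $Y$ at the common position $i$ of $e$ and route through an intermediate circuit $W$. Both circuits have $e$ as their $i$-th element. By \cref{positivecircuit}, the positivity conditions on $x_1<\cdots<x_i=e$ only involve rows $1,\ldots,i-1$ and columns $\le e$, that is, the matrix $A^{\le}_{i-1}$ (or $A^{\le}_{i}$). Likewise, the conditions on $e=x_i<\cdots<x_{r+1}$ only involve rows $i,\ldots,r$ and columns $\ge e$, that is, $A^{\ge}_{i}$.

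Consequently, the "left half" $\{x_1,\ldots,x_i\}$ and the "right half" $\{x_i,\ldots,x_{r+1}\}$ can be chosen independently. Define $W$ by $\underline{W}=\{y_1,\ldots,y_{i-1},e,x_{i+1},\ldots,x_{r+1}\}$. The increasing order is preserved because $y_{i-1}<e<x_{i+1}$. The sign condition at each index $\ell\le i-1$ comes from $Y$, and at each index $\ell\ge i$ it comes from $X$; the condition at $\ell=i-1$ uses $a_{i-1,y_{i-1}}=-a_{i-1,e}$, which holds for $Y$. Hence $W\in\mathcal{C}_e^+(A)$ by \cref{positivecircuit}, with $e=w_i$.

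Now compare $X$ with $W$. They agree on all indices $\ge i$, so the last index where they differ is some $j<i$. By \cref{e_primero_ultimo}\ref{it:bb}, $d_{G_e(A)}(X,W)\le j\le i-1$; if they do not differ at all, the distance is $0$. Similarly, $W$ and $Y$ agree on indices $\le i$, so their first differing index $j'$ satisfies $j'>i$. By \cref{e_primero_ultimo}\ref{it:aa}, $d_{G_e(A)}(W,Y)\le r+2-j'\le r+1-i$. Summing gives
\[
d_{G_e(A)}(X,Y)\le (i-1)+(r+1-i)=r.
\]
The cases $i=1$ and $i=r+1$ are degenerate: one half is empty, so $W$ equals $Y$ or $X$, and only one application of \cref{e_primero_ultimo} is needed.

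The main point to check carefully is that $W$ is genuinely a positive circuit containing $e$ at position $i$. This amounts to the locality of the sign conditions in \cref{positivecircuit} at the junction index, namely the pair $(w_{i-1},w_i)=(y_{i-1},e)$ in row $i-1$ and the pair $(w_i,w_{i+1})=(e,x_{i+1})$ in row $i$. Both are inherited from $Y$ and $X$ respectively, so the check should go through. Beyond that, the only care needed is the off-by-one bookkeeping in the bounds of \cref{e_primero_ultimo}.
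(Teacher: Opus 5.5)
Your proof is correct and is essentially the paper's argument: the paper routes through the same intermediate circuit, with support $\{y_1,\ldots,y_{i-1},e,x_{i+1},\ldots,x_{r+1}\}$, written there as $\underline{Y_1}\cup\underline{X_2}$. The only difference is that the paper runs the proof of \cref{e_primero_ultimo} inside the submatrices $A^{\le}_{i-1}$ and $A^{\ge}_{i}$ and lifts the resulting paths, whereas you apply \cref{e_primero_ultimo} as stated directly in $A$; this is valid because that lemma allows $e$ at any position $i$, and it slightly streamlines the argument.
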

\begin{proof}
If $X=Y$, there is nothing to prove. The case $i=1$ follows from
\cref{e_primero_ultimo}\ref{it:aa}, and the case $i=r+1$ follows from
\cref{e_primero_ultimo}\ref{it:bb}. We may therefore assume that
$2\le i\le r$.

Let
$\underline{X_1}=\{x_1,\ldots,x_{i-1},e\}$ and
$\underline{Y_1}=\{y_1,\ldots,y_{i-1},e\}$. By
\cref{positivecircuit},
$X_1,Y_1\in\mathcal{C}_e^+(A^{\le}_{i-1})$. The proof of
\cref{e_primero_ultimo}\ref{it:bb} gives a path from $X_1$ to $Y_1$ of
length at most $i-1$ in which consecutive circuits differ in one
element; if $X_1=Y_1$, take the path of length zero.

Similarly, let
$\underline{X_2}=\{e,x_{i+1},\ldots,x_{r+1}\}$ and
$\underline{Y_2}=\{e,y_{i+1},\ldots,y_{r+1}\}$. By
\cref{positivecircuit},
$X_2,Y_2\in\mathcal{C}_e^+(A^{\ge}_{i})$, and the proof of
\cref{e_primero_ultimo}\ref{it:aa} gives a path from $X_2$ to $Y_2$ of
length at most $r+1-i$ in which consecutive circuits differ in one
element.

Lift the first path to $\mathcal{C}_e^+(A)$ by adjoining
$\underline{X_2}$ to every support, and lift the second path by
adjoining $\underline{Y_1}$ to every support. By
\cref{positivecircuit}, all resulting signed sets are positive
circuits of $\M_A$. Consecutive supports have symmetric difference
$2$, so they are adjacent by \cref{obs:uniformcircuit}. Concatenating
the two lifted paths gives
\[
d_{G_e(A)}(X,Y)\le(i-1)+(r+1-i)=r,
\]
concluding the proof.
\end{proof}


We will need the following lemma in order to prove  \cref{LOMsdiamr}.

\begin{lemma}\label{cambioeposition}
Let   $\M_A$ be a  LOM of rank $r$ and  let $X\in \mathcal{C}_e^+(A)$. Then the following  hold.
\begin{enumerate}[label=(\alph*)]
  \item  If  $e=x_1$ then for every $i=2,\ldots,r$ there exists  $X_i\in \mathcal{C}_e^+(A^{\ge}_{i})$ \label{it:aaa}
  such that $e\in 
\underline{X_r}\subset \underline{X_{r-1}}\subset\cdots \subset \underline{X_2}\subset \underline{X}$.
\item If  $e=x_{r+1}$ then for every $i=1,\ldots,r-1$ there exists $X_i\in\mathcal{C}_e^+(A^{\le}_{i})$ \label{it:bbb} 
such that $e\in \underline{X_1}\subset \underline{X_{2}}\subset\cdots \subset \underline{X_{r-1}}\subset \underline{X}$.
\end{enumerate}
 \end{lemma}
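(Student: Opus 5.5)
We argue by induction, peeling off one row of $A$ at a time and applying \cref{intercambio}\ref{it:a} to the LOM given by the remaining rows. We prove \ref{it:aaa}; \ref{it:bbb} follows by the same argument with the order reversed, using \cref{intercambio}\ref{it:b} and peeling rows from the bottom instead.

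\emph{Submatrices.} First record that the submatrix $A^{\ge}_i$ is again a $\pm1$-matrix, now with $r-i+1$ rows and columns $\{e,\ldots,n\}$. So it defines a LOM of rank $r-i+1$. By \cref{positivecircuit}, a set $\{z_1<\cdots<z_{r-i+2}\}\subseteq\{e,\ldots,n\}$ supports a positive circuit of $\M_{A^{\ge}_i}$ if and only if $a_{i-1+\ell,z_\ell}=-a_{i-1+\ell,z_{\ell+1}}$ for $\ell=1,\ldots,r-i+1$. Here rows are indexed by their original index in $A$. Thus \cref{positivecircuit}, \cref{intercambio} and the notation $x_j(y)$ apply verbatim inside $A^{\ge}_i$.

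\emph{Induction.} Put $X_1:=X$. This is a positive circuit of $A^{\ge}_1$, since $e=x_1$ is the minimum column of $\underline X$. Suppose $X_{i-1}\in\mathcal C_e^+(A^{\ge}_{i-1})$ has been built for some $2\le i\le r$, with support $\{e=z_1<z_2<\cdots<z_{r-i+3}\}\subseteq\underline X$.

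\emph{Key observation.} Delete $e$ and look at the set $W=\{z_2<\cdots<z_{r-i+3}\}$. The positivity conditions of $X_{i-1}$ for rows $i,\ldots,r$ are exactly $a_{i,z_2}=-a_{i,z_3},\ldots,a_{r,z_{r-i+2}}=-a_{r,z_{r-i+3}}$. By the criterion above, $W$ is therefore the support of a positive circuit of $\M_{A^{\ge}_i}$, of size $r-i+2$ as required.

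\emph{Reinserting $e$.} Now $e<z_2=\min W$ and $e$ is a column of $A^{\ge}_i$. The second case of \cref{intercambio}\ref{it:a} ($y<x_1$) applies with $y=e$. It yields a positive circuit $X_i$ of $\M_{A^{\ge}_i}$ with $\underline{X_i}=(W\cup\{e\})\setminus\{w\}$ for some $w\in W$. Concretely, $\underline{X_i}=\{e,z_2(e),\ldots\}$ via the chain $x^s_1(e)$ taken inside $W$.

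\emph{Conclusion.} Hence $e\in\underline{X_i}$, so $X_i\in\mathcal C_e^+(A^{\ge}_i)$. Also $\underline{X_i}\subseteq\underline{X_{i-1}}$, and the inclusion is strict because the sizes are $r-i+2<r-i+3$. Iterating from $i=2$ to $i=r$ produces the chain $e\in\underline{X_r}\subset\cdots\subset\underline{X_2}\subset\underline X$.

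The main point needing care is the bookkeeping in the key observation. One must check that dropping $e$ from a positive circuit of $A^{\ge}_{i-1}$ gives exactly the positivity conditions for rows $i,\ldots,r$, with the row shift handled correctly. One must also check that \cref{intercambio} is legitimately applied to the sub-LOM $A^{\ge}_i$ rather than to $A$. Both follow directly from \cref{positivecircuit}, since the sign rule only involves consecutive support elements and the corresponding row.
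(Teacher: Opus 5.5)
Your proposal is correct and is essentially the paper's proof: at each step you delete $e$ to get a positive circuit of the LOM of $A^{\ge}_{i}$ (the paper's $Z_{i-1}$), then reinsert $e$ using the case $y<x_1$ of \cref{intercambio}\ref{it:a}, and part (b) follows by reversing the order. The only differences are cosmetic: you start the induction at $X_1:=X$ instead of treating $i=2$ as a separate base case, and you write out the row-index bookkeeping that the paper leaves implicit.
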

 \begin{proof}
\ref{it:aaa} We construct $X_i$ by induction on $i$, for $2\le i\le r$.
For $i=2$, denote
$\underline{Z_1}=\{x_2,\ldots,x_{r+1}\}$ and note that
$Z_1\in\mathcal{C}^+(A^{\ge}_2)$ by \cref{positivecircuit}.
Applying \cref{intercambio}\ref{it:a} in $A^{\ge}_2$ to $Z_1$ and
$e<x_2$, we obtain $X_2\in\mathcal{C}_e^+(A^{\ge}_2)$ with
$\underline{X_2}=\underline{Z_1}\cup e-z_1\subset\underline{X}$ for
some $z_1\in\underline{Z_1}$.

Now suppose that for some $2\le i\le r-1$ we have constructed
$X_i\in\mathcal{C}_e^+(A^{\ge}_i)$ such that
$e\in\underline{X_i}\subset\underline{X_{i-1}}\subset\cdots
\subset\underline{X_2}\subset\underline{X}$ and
$\underline{X_i}=\underline{Z_{i-1}}\cup e-z_{i-1}$, where
$Z_{i-1}\in\mathcal{C}^+(A^{\ge}_i)$ and
$z_{i-1}\in\underline{Z_{i-1}}$.

Set
$\underline{Z_i}=\underline{Z_{i-1}}-z_{i-1}
=\underline{X_i}-e$. By \cref{positivecircuit},
$Z_i\in\mathcal{C}^+(A^{\ge}_{i+1})$. Applying
\cref{intercambio}\ref{it:a} in $A^{\ge}_{i+1}$ to $Z_i$ and the element
$e$, which is smaller than every element of $\underline{Z_i}$, we
obtain $X_{i+1}\in\mathcal{C}_e^+(A^{\ge}_{i+1})$ with
$\underline{X_{i+1}}=\underline{Z_i}\cup e-z_i$ for some
$z_i\in\underline{Z_i}$. Hence
$e\in\underline{X_{i+1}}\subset\underline{X_i}$, completing the
induction.

Assertion \ref{it:bbb} follows analogously by reversing the order.
\end{proof}

\LOM*

\begin{proof}
We  show that $d_{G_e(A)}(C,C')\le r$ for any $C,C'\in \mathcal{C}^+_e(A)$. The result then follows from \Cref{thm:main}, since $\M_A$ is uniform.

Let $s,q\ge0$ with  $s+q=r$ and write $\underline{C}=\{x_1,\ldots, x_{s},e,y_1,\ldots,y_q\}$, where we use the convention that $\underline{C}=\{x_1,\ldots, x_{r},e\}$ if $s=r$ and $\underline{C}=\{e,y_1,\ldots,y_r\}$ if $q=r$. Similarly,  let $s',q'\ge0$ with $s'+q'=r$ and write $\underline{C'}=\{x'_1,\ldots, x'_{s'},e,y'_1,\ldots,y'_{q'}\}$.  
If $s=s'$ (and so $q=q'$) then the theorem holds by \Cref{emismaposicion}. By symmetry, we may suppose that $s>s'$ and $q'>q$.

First, we construct positive circuits $Z_i\in  \mathcal{C}_e^+(A)$ for every $i=s',\ldots,s$, which will form part of the path between   $C$ and $C'$ in the graph $G_e(A)$, as follows.
Let $X_s\in \mathcal{C}_e^+(A^{\le}_{s})$ be such that $\underline{X_s}=\{x_1,\ldots, x_{s},e\}$ and note  that $A^{\le}_{s}$ is a matrix with $s$ rows.
Thus, applying   \cref{cambioeposition}\ref{it:bbb} in matrix $A^{\le}_{s}$ to $X_s$ for every $i=1,\ldots,s-1$, there exists $X_i\in \mathcal{C}_e^+(A^{\le}_{i})$ 
such that $e\in 
\underline{X_{1}}\subset \underline{X_{2}}\subset\cdots\subset \underline{X_s}$. 
Similarly,  we set $\underline{Y'_{s'+1}}=\{e,y'_1,\ldots,y'_{q'}\}$ and note that $Y'_{s'+1}\in \mathcal{C}_e^+(A^{\ge}_{s'+1})$. 
Applying  \cref{cambioeposition}\ref{it:aaa} in matrix $A^{\ge}_{s'+1}$ to $Y'_{s'+1}$,  we obtain that for every $i=s'+2,\ldots,r$,  there exists $Y'_i\in \mathcal{C}_e^+(A^{\ge}_{i})$ 
such that 
$e\in 
\underline{Y'_{r}}\subset \underline{Y'_{r-1}}\subset\cdots \subset \underline{Y'_{s}}\subset \cdots \subset \underline{Y'_{s'+1}}$. 

For notational convenience, set
$\underline{X_0}=\underline{Y'_{r+1}}=\{e\}$. For every
$i=s',\ldots,s$, define
\[
\underline{Z_i}=\underline{X_i}\cup\underline{Y'_{i+1}}.
\]
By \cref{positivecircuit}, $Z_i\in\mathcal{C}_e^+(A)$: the sign
conditions before $e$ are inherited from $X_i$, while those after
$e$ are inherited from $Y'_{i+1}$. Furthermore, the inclusions in
the two chains give
$|\underline{Z_i}\Delta\underline{Z_{i+1}}|=2$ for every
$i=s',\ldots,s-1$. Hence
$Z_i\sim_{G_e(A)}Z_{i+1}$ by \cref{obs:uniformcircuit}, and therefore
\[
d_{G_e(A)}(Z_s,Z_{s'})\le s-s'.
\]


Secondly,
$\{x_1,\ldots,x_s,e\}\subseteq
\underline{C}\cap\underline{Z_s}$. If $C\neq Z_s$, then the first
index $j$ for which $C$ and $Z_s$ differ satisfies $j\ge s+2$, and
\cref{e_primero_ultimo}\ref{it:aa} gives
$d_{G_e(A)}(C,Z_s)\le r+2-j\le q$.
The same inequality is trivial if $C=Z_s$.

Analogously, if $C'\neq Z_{s'}$, then the last index $j$ for which
$C'$ and $Z_{s'}$ differ satisfies $j\le s'$, and
\cref{e_primero_ultimo}\ref{it:bb} gives
$d_{G_e(A)}(C',Z_{s'})\le s'$.
Again, the inequality is trivial if $C'=Z_{s'}$. Therefore,
\[
d_{G_e(A)}(C,C')
\le d_{G_e(A)}(C,Z_s)
   +d_{G_e(A)}(Z_s,Z_{s'})
   +d_{G_e(A)}(Z_{s'},C')
\le q+(s-s')+s'=r,
\]
concluding the proof.
\end{proof}
\begin{example}
Using  matrix $A$ of \cref{thmLOM}, we aim to find  a path from 
$$\underline{C}=\{x_1,x_2,x_3,e,y_1,\ldots,y_4\}=\{1,2,3,e=5,8,10,12,15\}$$ to
$$\underline{C'}=\{x'_1,e,y'_1,\ldots,y'_6\}=\{4,e=5,6,7,9,11,13,14\}$$ in $G_e(A)$ as in the proof of Theorem \ref{LOMsdiamr}. 
In this example, $s=3,q=4,s'=1$ and $q'=6$. On the one hand, we find $Z_i\in \mathcal{C}_e^+(A)$ for $i=1,2,3$, where 
$\underline{Z_i}=\underline{X_i}\cup \underline{Y'_{i+1}}$ and $$\underline{X_{1}}=\{3,e\}\subset \underline{X_{2}}=\{1,3,e\}\subset \underline{X_3}=\{1,2,3,e\},$$ $$\underline{Y'_{4}}=\{e,6,11,13,14\}\subset \underline{Y'_{3}}=\{e,6,9,11,13,14\}\subset \underline{Y'_2}=\{e,6,7,9,11,13,14\}.$$ 
On the other hand, note that $d_{G_e(A)}(C,Z_3)=4$, as shown in Example \ref{example1}, and that $C'\sim_{G_e(A)}Z_1$. Therefore,  the path of length $7$ in $G_e(A)$  from $C$ to $C'$  has internal vertices
\[\{1,2,3,e,6,8,10,15\}
\rightarrow \{1,2,3,e,6,8,10,14\}
\rightarrow \{1,2,3,e,6,8,13,14\}
\rightarrow \underline{Z_3}
\rightarrow \underline{Z_2}
\rightarrow \underline{Z_1}.
\]

\end{example}

\begin{figure}[htb]
\begin{center}
 \includegraphics[width=1\textwidth]{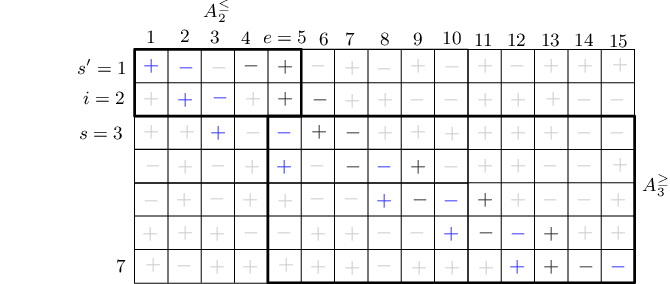}
 \caption{A matrix $A\in \{+1,-1\}^{7\times 15}$ and the submatrices $A_2^{\le}$ and $A_3^{\ge}$} \label{thmLOM}
 \end{center}
\end{figure}

\paragraph{Acknowledgments.} We thank Marco Caoduro for introducing us to the original problem on $0$-embracing sets, Francisco Santos and Frédéric Meunier for helpful comments concerning the Hirsch conjecture, and Benjamin Schröter and Lorenzo Vecchi for initial discussions on the matroidal extension. The research was supported by the Lendület Programme of the Hungarian Academy of Sciences (LP2021-1/2021); by the Ministry of Innovation and Technology of Hungary from the National Research, Development and Innovation Fund (ADVANCED 150556 and 153096); and by the Dynasnet European Research Council Synergy project (ERC-2018-SYG 810115).
KK was supported through grant 
PID2022-137283NB-C22 funded by MICIU/AEI/10.13039/501100011033 by ERDF/EU and through the Severo Ochoa and Mar\'ia de Maeztu Program for Centers and Units of Excellence in R\&D (CEX2020-001084-M) and ANR project MIMETIQUE: ANR-25-CE48-4089-01. L. P. Montejano was supported by the Spanish Ministry of Science and
Innovation due to the grants PID2023-146925OB-I00 and PID2023-148202OB-C21.

\paragraph{Declaration of generative AI and AI-assisted technologies in the manuscript preparation process.} During the preparation of this work, the authors made modest and conservative use of
ChatGPT (OpenAI, GPT-5.6 Sol) as an editorial and verification aid, primarily to improve
language and presentation, to identify possible inconsistencies, and to suggest local
revisions. All mathematical arguments and suggested changes were independently checked
by the authors. After using this tool, the authors reviewed and edited the content as needed
and take full responsibility for the content of the published article.

\bibliographystyle{abbrv}
\bibliography{lit}

\end{document}